\newtheorem{theorem}{Theorem}[section]
\newtheorem{example}[theorem]{Example}
\newtheorem{proposition}[theorem]{Proposition}
\newtheorem{lemma}[theorem]{Lemma}
\newtheorem{corollary}[theorem]{Corollary}
\newtheorem{remark}[theorem]{Remark}
\newcommand{\sezione}[1]{\section{#1}\setcounter{equation}{0}}
\def\R{\mathbb R}
\def\e{{\epsilon}}
\def\di12{\mathcal{D}^{1,2}(\R^n)}
\def\d{\delta}
\def\D{\Delta}
\def\l{{\lambda}}
\def\al{\alpha}
\def\th{{\theta}}
\def\0l{_{0,\l}}
\def\1l{_{1,\l}}
\def\2l{_{2,\l}}
\def\3l{_{3,\l}}
\def\4l{_{4,\l}}
\def\de{\partial}
\def\Om{\Omega}
\begin{document}
\title{A Morse Lemma for
degenerate critical points of solutions of nonlinear equations in $\R^2$}
\author{
Massimo Grossi\thanks{Dipartimento di Matematica, Universit\`a di Roma
``La Sapienza", P.le A. Moro 2 - 00185 Roma, e-mail {\sf
massimo.grossi@uniroma1.it}.}}
\maketitle
 \begin{abstract}
In this paper we prove a Morse Lemma for degenerate critical points of a function $u$ which satisfies
\begin{equation}\label{P0}
-\Delta u=f(u) \quad \hbox{ in }B_1,
\end{equation}  
where $u\in C^2(B_1)$, $B_1$ is the unit ball of $\R^2$ and $f$ is a smooth nonlinearity. Other results on the nondegeneracy of the critical points and the shape of the level sets are proved.
 \end{abstract}

\sezione{Introduction and statement of the main results}
A famous result  of  Morse theory concerns the classification of $nondegenerate$ critical points of a smooth function $u$. We recall it in the particular case of the plane,
\begin{lemma}[Morse Lemma]
If $(0,0)$ is a nondegenerate critical point of $u$ then there exists a $C^2$ change of coordinates in a neighborhood of $(0,0)$ such that the
function $u$ expressed with respect to the new local coordinates $(x, y)$ takes
one of the following three standard forms
\begin{itemize}
\item $u(x,y)=u(0,0)-x^2-y^2$\quad if $(0,0)$ is a maximum point for $u$
\item $u(x,y)= u(0,0)+x^2+y^2$\quad if $(0,0)$ is a minimum point for $u$
\item $u(x,y)=u(0,0)+x^2-y^2$\quad if $(0,0)$ is a saddle point for $u$.
\end{itemize}
\end{lemma}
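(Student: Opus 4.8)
This is the classical Morse Lemma, so there is no deep obstacle; the plan is simply to run the standard Morse--Palais argument, which in the plane is particularly short. First normalise $u(0,0)=0$. Since $(0,0)$ is a critical point we have $\nabla u(0,0)=0$, and Taylor's formula with integral remainder (Hadamard's lemma) gives, in a neighbourhood of the origin,
\begin{equation*}
u(z)=\sum_{i,j=1}^{2}a_{ij}(z)\,z_iz_j ,\qquad a_{ij}(z)=\int_0^1(1-t)\,\partial_{ij}u(tz)\,dt ,
\end{equation*}
with $z=(z_1,z_2)$; here each $a_{ij}$ inherits the regularity of $\partial_{ij}u$, $a_{ij}=a_{ji}$, and $a_{ij}(0)=\tfrac12\,\partial_{ij}u(0,0)$. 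Hence the symmetric matrix $A(z)=(a_{ij}(z))$ satisfies $A(0)=\tfrac12 D^2u(0,0)$, which is invertible by the nondegeneracy assumption. (In the setting of \eqref{P0}, $f$ smooth forces $u\in C^\infty$ by elliptic bootstrap, so the $a_{ij}$ are $C^\infty$ as well.)

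Next I would diagonalise the variable quadratic form $A(z)$ by successively completing the square, each step being a genuine change of coordinates via the inverse function theorem. After a preliminary linear substitution we may assume $a_{11}(0)\neq0$: if both diagonal entries of $A(0)$ vanished then $a_{12}(0)\neq0$, and $z_1=\zeta_1+\zeta_2$, $z_2=\zeta_1-\zeta_2$ produces a nonzero $(1,1)$ entry. With $a_{11}(z)\neq0$ near $0$ we write
\begin{equation*}
u(z)=a_{11}(z)\Bigl(z_1+\tfrac{a_{12}(z)}{a_{11}(z)}z_2\Bigr)^{2}+\Bigl(a_{22}(z)-\tfrac{a_{12}(z)^2}{a_{11}(z)}\Bigr)z_2^{2}
\end{equation*}
and set $w_1=|a_{11}(z)|^{1/2}\bigl(z_1+\tfrac{a_{12}(z)}{a_{11}(z)}z_2\bigr)$, $w_2=z_2$. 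The Jacobian of $z\mapsto w$ at the origin has determinant $|a_{11}(0)|^{1/2}\neq0$, so by the inverse function theorem this is a local diffeomorphism of the same regularity as the $a_{ij}$. In the new variables $u=\varepsilon_1 w_1^2+b(w)\,w_2^2$ with $\varepsilon_1=\operatorname{sgn}a_{11}(0)$ and $b(0)=\det A(0)/a_{11}(0)\neq0$; a further substitution $w_2'=|b(w)|^{1/2}w_2$, $w_1'=w_1$ brings $u$ to the form $\varepsilon_1(w_1')^2+\varepsilon_2(w_2')^2$.

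Finally, the pair of signs $(\varepsilon_1,\varepsilon_2)$ equals the signature of $D^2u(0,0)$, which is invariant under the change of coordinates: it is $(-,-)$ at a local maximum, $(+,+)$ at a local minimum, and $(+,-)$ (after relabelling the two coordinates) at a saddle point; renaming $(w_1',w_2')$ as $(x,y)$ then yields the three expressions in the statement. The only step requiring real care is the regularity bookkeeping --- verifying that Hadamard's coefficients inherit the smoothness of $u$ and that the inverse function theorem applies at each substitution. Since $u\in C^\infty$ here, the composed change of coordinates is $C^\infty$, in particular $C^2$, and no derivatives are lost; had one only assumed $u\in C^2$ this scheme would lose two derivatives and a sharper argument would be needed, but that situation does not arise.
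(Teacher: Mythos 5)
Your argument is correct, but there is nothing in the paper to compare it against: the paper states this lemma purely as background (it is the classical Morse Lemma, attributed implicitly to the standard literature such as Milnor's book) and gives no proof of it. What you have written is the standard Morse--Palais proof: Hadamard's lemma to write $u$ as a variable quadratic form $\sum a_{ij}(z)z_iz_j$ with $A(0)=\tfrac12 D^2u(0,0)$, iterated completion of the square with the inverse function theorem justifying each substitution, and Sylvester's law of inertia to read off the three cases from the definiteness of the Hessian. The computations check out (the cross term $2a_{12}z_1z_2$ is absorbed correctly, the Jacobian determinants are nonzero, and $b(0)=\det A(0)/a_{11}(0)\neq0$ by nondegeneracy), and you are right to flag the only delicate point, namely that the Hadamard coefficients lose two derivatives relative to $u$; since in this paper's setting elliptic regularity gives $u\in C^\infty$, the resulting change of coordinates is indeed $C^2$ (in fact $C^\infty$), so the statement as given is fully justified by your argument.
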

By  saddle point we mean a critical point for $u$ which is neither a maximum nor a minimum. We recall that $(0,0)$ is a  $nondegenerate$ critical point of a function $u\in C^2(B_1)$ is if the Hessian matrix $H_u\big((0,0)\big)=\left(\begin{matrix}u_{xx}(0,0)&u_{xy}(0,0)\\u_{xy}(0,0)&u_{yy}(0,0)\end{matrix}\right)$ is invertible.

A lot of study has been devoted to the case where the critical point is degenerate mainly addressed to study the topological properties of the  sublevels of $u$ (critical groups, Betti number, etc.). It is virtually impossible to provide a complete bibliography on this subject, we simply mention the classic Conley and Milnor's books  (\cite{C} and \cite{M}) and the paper \cite{D} where some applications to \eqref{P} have been considered.

Anyway if the critical point is $degenerate$, a complete classification like in the Morse Lemma is impossible without some additional assumptions. Here we require that $u$ is a solution to
\begin{equation}\label{P}
-\Delta u=f(u) \quad \hbox{ in }B_1,
\end{equation}
where $u\in C^2(B_1)$ and $B_1$ is the unit ball of $\R^2$. We assume that $f:\R\to\R$ is a smooth nonlinearity, say $f\in C^\infty$. In many cases this assumption can be relaxed, we do not care about the optimal regularity  for $f$.

In this paper we want to extend the Morse Lemma to $degenerate$ critical points of solutions of the problem  \eqref{P}. We would like to stress that
\begin{itemize}
\item We do not assume any sign assumption on $f$,
\item Our result is local, so no boundary condition is needed,
\item  The critical point $(0,0)$ does not need to be isolated.
\end{itemize}

In order to state our results we need to fix a suitable setting. For a complex number $z=x+iy\in\mathbb{C}$ let us denote by $Re(z)$ and $Im(z)$ the real and imaginary part of $z$. Moreover if $(0,0)$ is a  degenerate critical point of $u$ it is not restrictive to assume that (up to a suitable rotation)
\begin{equation}\label{c1}
u_{xx}(0,0)=u_{xy}(0,0)=0.
\end{equation}
Finally  if $u_x\not\equiv0$ let us consider the minimum integer $n\ge3$ such that
\begin{equation}\label{c2a}
\frac{\partial^nu}{\partial x^{n-k}\partial y^k}(0,0)\ne0\quad\hbox{for some }k=0,..,n-1.
\end{equation}
By classical results (see \cite{Ar} or page 422 in \cite{CF}) if $u_x\not\equiv0$ such a $n$ always exists.
In all the paper we refer to $n$ as the integer satisfying \eqref{c2a}. 

Our first result consider the case where $(0,0)$ is a  degenerate maximum point. 
\begin{theorem}[Morse Lemma for degenerate maximum points]\label{c2}
Let $u$ be a nonconstant solution to  \eqref{P}. Then if $(0,0)$ is a degenerate maximum point of $u$ satisfying \eqref{c1} then
\begin{equation}\label{c1a}
u_{yy}(0,0)<0.
\end{equation}
Moreover the following expansions hold:\\
$i)$ If $n$ is even we have that
\begin{equation}\label{c1b}
u(x,y)=u(0,0)+\left(\frac{u_{yy}(0,0)}2+o(1)\right)y^2+\left(\frac{\frac{\partial^nu}{\partial x^n}(0,0)}{n!}+o(1)\right)Re\left(z^n\right)+\left(
\frac{\frac{\partial^nu}{\partial x^{n-1}\partial y}}{n!}(0,0)+o(1)\right)Im\left(z^n\right),
\end{equation}
with $\frac{\partial^nu}{\partial x^n}(0,0)<0$.\\
$ii)$ If $n$ is odd we have that $\frac{\partial^nu}{\partial x^n}(0,0)=0$, $\frac{\partial^nu}{\partial x^{n-1}\partial y}(0,0)\ne0$ and
 there exists an integer $l\in[n+1,2n-2]$ such that
\begin{equation}\label{g1}
\frac{\partial^lu}{\partial x^l}(0,0)<0
\end{equation}
and 
\begin{equation}\label{c1c}
u(x,y)=u(0,0)+\left(\frac{u_{yy}(0,0)}2+o(1)\right)y^2+\left(
\frac{\frac{\partial^nu}{\partial x^{n-1}\partial y}(0,0)}{n!}+o(1)\right)Im\left(z^n\right)+\left(\frac{\frac{\partial^lu}{\partial x^l}(0,0)}{l!}+o(1)\right)Re\left(z^l\right).
\end{equation}
Finally if $l=2n-2$ we have that
\begin{equation}\label{c1d}
\left(\frac{\partial^nu}{\partial x^{n-1}\partial y}(0,0)\right)^2\le \frac{2\big[(n-1)!\big]^2}{(2n-2)!}u_{yy}(0,0) \frac{\partial^{2n-2}u}{\partial x^{2n-2}}(0,0).
\end{equation}
\end{theorem}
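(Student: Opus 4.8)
\emph{Proof strategy.} The whole argument is driven by two structural facts about \eqref{P}. First, differentiating \eqref{P} repeatedly expresses every Taylor coefficient $\frac{\partial^{j+k}u}{\partial x^j\partial y^k}(0,0)$ with $k\ge 2$ in terms of lower order ones, so the Taylor jet of $u$ at $0$ is rigidly organized around the pure $x$--derivatives $\frac{\partial^m u}{\partial x^m}(0,0)$ and the derivatives $\frac{\partial^m u}{\partial x^{m-1}\partial y}(0,0)$. Second, $u_x$ solves the linearized equation $-\Delta(u_x)=f'(u)\,u_x$, hence by unique continuation it vanishes at $0$ to finite order, and by the Hartman--Wintner/Bers theorem (solutions of $-\Delta v=c(x)v$ look like harmonic functions near their zeros) its Taylor expansion at $0$ opens with a nonzero harmonic homogeneous polynomial. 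I begin with \eqref{c1a}: from \eqref{P} and \eqref{c1} one has $u_{yy}(0,0)=-f(u(0,0))$, and the second derivative test at the maximum gives $u_{yy}(0,0)\le 0$; if equality held then $f(u(0,0))=0$, so $v:=u(0,0)-u\ge 0$ near $0$ would solve $-\Delta v=c(x)v$ with $c\in L^\infty$ and $v(0,0)=0$, whence by the strong maximum principle and unique continuation $u$ would be constant on $B_1$, a contradiction. Thus $u_{yy}(0,0)<0$.

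Next I fix the two privileged objects. Since $u_x\not\equiv 0$, it vanishes at $0$ to order exactly $n-1$ by \eqref{c1}--\eqref{c2a}, and Hartman--Wintner gives $u_x=\mathrm{Re}(\gamma z^{n-1})+o(|z|^{n-1})$ with $\gamma\neq 0$; matching the degree $n-1$ Taylor part yields $\mathrm{Re}\,\gamma=\frac{1}{(n-1)!}\frac{\partial^n u}{\partial x^n}(0,0)$ and $\mathrm{Im}\,\gamma=-\frac{1}{(n-1)!}\frac{\partial^n u}{\partial x^{n-1}\partial y}(0,0)$. On the other hand $u_y$ vanishes at $0$ to order exactly $1$ because $u_{yy}(0,0)\neq 0$, so $\{u_y=0\}$ is near $0$ a smooth graph $\{y=\psi(x)\}$ with $\psi(0)=\psi'(0)=0$, and using $u_{xy}(0,0)=0$ together with \eqref{c2a} one refines this to $\psi(x)=-\frac{1}{(n-1)!\,u_{yy}(0,0)}\frac{\partial^n u}{\partial x^{n-1}\partial y}(0,0)\,x^{n-1}+o(x^{n-1})$. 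Passing to the coordinates $(x,Y)$ with $Y=y-\psi(x)$ turns $u$ into $\tilde u(x,Y)=g(x)+\tfrac12 u_{yy}(x,\psi(x))Y^2+O(|Y|^3)$ where $g(x):=u(x,\psi(x))$; since $u_{yy}(x,\psi(x))<0$ near $0$, the point $(0,0)$ is a maximum of $u$ \emph{iff} $x=0$ is a maximum of $g$, and $g'(x)=u_x(x,\psi(x))$. Hence the sign conditions in the theorem must come from the elementary statement that the first nonzero Taylor coefficient of $g'$ at $0$ lies at an odd order and is negative.

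The parity of $n$ then splits things. If $n$ is odd, $u(x,0)-u(0,0)=\frac{1}{n!}\frac{\partial^n u}{\partial x^n}(0,0)\,x^n+o(x^n)$ has an odd leading power, so maximality forces $\frac{\partial^n u}{\partial x^n}(0,0)=0$, hence $\mathrm{Re}\,\gamma=0$ and $\frac{\partial^n u}{\partial x^{n-1}\partial y}(0,0)\neq 0$. Writing $g(x)-u(0,0)=(u(x,0)-u(0,0))+(u(x,\psi(x))-u(x,0))$ and Taylor expanding the last bracket with $\psi(x)=O(x^{n-1})$ gives
\[
u(x,\psi(x))-u(x,0)=-\frac{\bigl(\frac{\partial^n u}{\partial x^{n-1}\partial y}(0,0)\bigr)^2}{2\,[(n-1)!]^2\,u_{yy}(0,0)}\,x^{2n-2}+o\!\left(x^{2n-2}\right),
\]
whose leading coefficient is \emph{positive} since $u_{yy}(0,0)<0$. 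Letting $l\ge 3$ be the least integer with $\frac{\partial^l u}{\partial x^l}(0,0)\neq 0$ (necessarily $l\ge n+1$), if $l\ge 2n-1$ the positive $x^{2n-2}$ term would be the leading term of $g(x)-u(0,0)$, contradicting that $g$ has a maximum; so $l\le 2n-2$, and $u(x,0)-u(0,0)\le 0$ forces $l$ even and $\frac{\partial^l u}{\partial x^l}(0,0)<0$, i.e. \eqref{g1}. When $l=2n-2$ the two contributions to $g(x)-u(0,0)$ have the same order and the requirement that their combined leading coefficient be $\le 0$ is exactly \eqref{c1d}. If $n$ is even, $u(x,0)-u(0,0)=\frac{1}{n!}\frac{\partial^n u}{\partial x^n}(0,0)x^n+o(x^n)\le 0$ gives $\frac{\partial^n u}{\partial x^n}(0,0)\le 0$ immediately; the strict inequality — which I expect to be \textbf{the main obstacle} — requires ruling out $\frac{\partial^n u}{\partial x^n}(0,0)=0$: in that case $\mathrm{Re}\,\gamma=0$, so $u_x=-\gamma_2\,\mathrm{Im}(z^{n-1})+o(|z|^{n-1})$ with $\gamma_2\neq 0$, and integrating in $x$ and completing the square in $y$ near the $x$--axis one sees that the cross term $\mathrm{Im}(z^n)\sim n\,x^{n-1}y$ — sign--changing in $x$ because $n-1$ is odd — dominates $\tfrac12 u_{yy}(0,0)y^2$ in the thin region $\{|y|\ll|x|^{n-1}\}$, producing points arbitrarily close to $0$ with $u>u(0,0)$; this contradiction gives $\frac{\partial^n u}{\partial x^n}(0,0)<0$.

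Finally the expansions \eqref{c1b}--\eqref{c1c} are obtained by integrating the Hartman--Wintner expansion: from $\int_0^x \gamma(t+iy)^{n-1}\,dt=\tfrac{\gamma}{n}\bigl(z^n-(iy)^n\bigr)$ one gets $u(x,y)=u(0,y)+\tfrac1n\mathrm{Re}(\gamma z^n)+(\text{pure powers of }y)+o(|z|^n)$, while $u(0,y)=u(0,0)+\tfrac12 u_{yy}(0,0)y^2+(\text{higher pure powers of }y)$ since the odd $y$--derivatives of $u$ at $0$ vanish. Every pure power of $y$ beyond $y^2$ is $o(y^2)$ and is absorbed into the coefficient $\bigl(\tfrac{u_{yy}(0,0)}{2}+o(1)\bigr)$, while the remainder $o(|z|^n)$ is absorbed into the coefficients of $\mathrm{Re}(z^n)$ and $\mathrm{Im}(z^n)$ via $\mathrm{Re}(z^n)^2+\mathrm{Im}(z^n)^2=|z|^{2n}$; together with the formulas for $\mathrm{Re}\,\gamma,\ \mathrm{Im}\,\gamma$ this yields \eqref{c1b} when $n$ is even. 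When $n$ is odd the $\mathrm{Re}(z^n)$ contribution drops out ($\mathrm{Re}\,\gamma=0$) and the extra term $\bigl(\frac{\partial^l u/\partial x^l(0,0)}{l!}+o(1)\bigr)\mathrm{Re}(z^l)$ instead arises from integrating the degree $l-1$ harmonic term in the expansion of $u_x$ — which is pinned down, together with $l$, by the Taylor--coefficient recursion coming from \eqref{P} — giving \eqref{c1c}. The tracking of these coefficients through \eqref{P} is routine but lengthy; the conceptual weight of the proof sits in the Hartman--Wintner step, the reduction to the one--dimensional function $g$, and the strict sign in the even case.
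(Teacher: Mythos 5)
Your proposal is correct in substance but reaches the theorem by a genuinely different route in two places. For \eqref{c1a} the paper does not touch the maximum principle: it proves (Proposition \ref{e3b}) that $u_{yy}(0,0)=0$ together with \eqref{c1} forces $(0,0)$ to be an isolated critical point of index $1-n\le -2$, and then contradicts the fact that an isolated maximum has index $1$ (Theorem \ref{b3}). Your argument --- $u_{yy}(0,0)=-f(u(0,0))$ by \eqref{c1} and the equation, so $u_{yy}(0,0)=0$ would give $f(u(0,0))=0$, whence $v=u(0,0)-u\ge 0$ solves $-\Delta v=c(x)v$, vanishes at an interior point, hence $v\equiv 0$ by the strong maximum principle and $u$ is constant by unique continuation --- is valid and considerably more elementary for the maximum case; the index machinery is the price the paper pays for a statement that also covers saddle points (Theorem \ref{c3b}), where no sign information is available. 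Second, where the paper obtains the leading harmonic block $a\,Re(z^n)+b\,Im(z^n)$ by differentiating the equation $n-2$ times and running the explicit Taylor--coefficient recursion \eqref{c15}--\eqref{c17}, you invoke the Hartman--Wintner/Bers asymptotics for $u_x$ (which solves the linearized equation): same content, outsourced to a known theorem. And where the paper tests the expansion \eqref{e18} along the curves $y=Ax^{n-1}$ and quantifies over $A$, you restrict $u$ to the critical curve $\{u_y=0\}=\{y=\psi(x)\}$ and study $g(x)=u(x,\psi(x))$; this is the same computation (the optimal $A$ is exactly the leading coefficient of $\psi$) but packaged so that \eqref{g1} and \eqref{c1d} become the one-variable statement that $g$ has a maximum at $0$, which is a clean way to see why the discriminant condition \eqref{c1d} appears.

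One caveat: your treatment of the strict inequality $\frac{\partial^nu}{\partial x^n}(0,0)<0$ for $n$ even is the thinnest step. The claim that $Im(z^n)\sim n x^{n-1}y$ ``dominates in the thin region $|y|\ll |x|^{n-1}$'' is not right as stated: in that region the pure powers $x^{l}$ with $n<l\le 2n-2$, and even the remainder $O(|x|^n+|y|^n)$ of \eqref{e18}, are much larger than $|x|^{n-1}|y|$. The correct scaling is $y=Bx^{n-1}$ with $B$ fixed and small (which your earlier quadratic-in-$B$ computation already uses), and one must still observe that the pure $x$-derivatives of orders $n+1,\dots,2n-2$ do not spoil the sign of the $x^{2n-2}$ coefficient. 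The paper's own proof of this step tests exactly the same curve and is equally silent about those intermediate terms, so you are no worse off; but in a write-up you should make the scaling explicit rather than appeal to a thin-region domination that fails.
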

Before to give an idea of the proof let us make some comments on Theorem \ref{c2}.
\begin{remark}
The same result holds if $(0,0)$ is a degenerate minimum point of $u$. In this case  $u_{yy}(0,0)>0$, $\frac{\partial^nu}{\partial x^n}(0,0)>0$, $\frac{\partial^lu}{\partial x^l}(0,0)>0$ and \eqref{c1d} does not change.
\end{remark}
\begin{remark}
Note that in Theorem  \ref{c2} the assumption $u_x\not\equiv0$ is not restrictive. Indeed if  $u_x\equiv0$ then \eqref{P} becomes 
\begin{equation}
\begin{cases}
-u''=f(u)\\
u'(0)=0
\end{cases}
\end{equation}
and expansions of the solution $u$ follows immediately. This case appears 
when $u(x,y)=\cos y$ which verifies $-\Delta u =u$ and admits $y=0$ as maximum points.
\end{remark}
In our opinion \eqref{c1a} is the most relevant results of the Theorems (see \cite{AP} for some properties of solutions satisfying  \eqref{c4b}). In  particular we get that there is no $u$ verifying \eqref{P} such that $u(x,y)\sim u(0,0)-x^4-y^4$ in a neighborhood of $(0,0)$!

Some similar properties to \eqref{c1b} and \eqref{c1c} can be found in \cite{CF} where the authors study the properties of the zero-set of solutions $u$ to
\begin{equation}
\Delta u=f(x,u,\nabla u)
\end{equation}
with
\begin{equation}
|f(x,u,\nabla u)|\le A|u|^\al+B|u|^\beta,\quad A,B>0,\ \al,\beta\ge1.
\end{equation}
Our result can be seen as an extension of Theorem 1.2 in \cite{CF} in an appropriate setting.

An interesting particular case of Theorem \ref{c2} is when the maximum (minimum) point is non-isolated.
Here we show that  equality holds in  \eqref{c1d}.
\begin{proposition}\label{P1}
Let $u$ be a nonconstant solution to  \eqref{P} and $(0,0)$ a  not $isolated$ degenerate maximum point of $u$ satisfying \eqref{c1}.

Then  $n$ is odd, $l=2n-2$ and the equality holds in  \eqref{c1d}.
\end{proposition}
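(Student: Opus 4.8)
\noindent The plan is to translate the nonisolatedness of $(0,0)$ into an infinite-order flatness of $u$ along the critical curve $\{u_y=0\}$, and then to confront this flatness with the expansions \eqref{c1b}--\eqref{c1c}. By Theorem \ref{c2} we already know $u_{yy}(0,0)<0$, so the implicit function theorem applied to $u_y$ (which satisfies $u_y(0,0)=0$ and $\partial_y u_y(0,0)=u_{yy}(0,0)\ne0$) gives a smooth function $\eta$ such that, near the origin, $\{u_y=0\}=\gamma:=\{\,y=\eta(x)\,\}$; moreover $\eta(0)=0$ and, by \eqref{c1}, $\eta'(0)=-u_{xy}(0,0)/u_{yy}(0,0)=0$, so $\eta(x)=O(x^2)$. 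Since $(0,0)$ is a not isolated maximum point there is a sequence of critical points $p_k\to(0,0)$ with $p_k\ne(0,0)$; as $u_y(p_k)=0$ we may write $p_k=(x_k,\eta(x_k))$ with $x_k\to0$, $x_k\ne0$. The smooth one-variable function $h(x):=u_x(x,\eta(x))$ then vanishes at the $x_k$, hence has a nonisolated zero at $0$, and iterated Rolle's theorem gives $h^{(j)}(0)=0$ for every $j$; since $u_y\equiv0$ on $\gamma$ we have $\frac{d}{dx}\bigl(u(x,\eta(x))-u(0,0)\bigr)=u_x(x,\eta(x))=h(x)$, so $g(x):=u(x,\eta(x))-u(0,0)$ is flat at $x=0$.

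It remains to compute the leading order of $g$. Writing $\eta(x)=cx^{m}(1+o(1))$ with $m\ge2$, $c\ne0$ (the cases $\eta\equiv0$ or $\eta$ flat being excluded directly, since then the term $\tfrac1{l!}\frac{\partial^{l}u}{\partial x^{l}}(0,0)x^{l}$, resp. $\tfrac1{n!}\frac{\partial^{n}u}{\partial x^{n}}(0,0)x^{n}$, survives in $g$ and contradicts flatness), and using $\eta=O(x^{2})$ so that $\mathrm{Re}\bigl((x+i\eta)^{d}\bigr)=x^{d}(1+o(1))$ and $\mathrm{Im}\bigl((x+i\eta)^{d}\bigr)=d\,c\,x^{d-1+m}(1+o(1))$, we argue as follows. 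If $n$ is even, substitute $y=\eta(x)$ in \eqref{c1b} and divide by $x^{\nu}$, $\nu:=\min(2m,n)$: since all the $o(1)$-coefficients are continuous and vanish at the origin, and the $\mathrm{Im}(z^{n})$-term has order $>\nu$, one gets $\lim_{x\to0}g(x)/x^{\nu}$ equal to $\tfrac{u_{yy}(0,0)}2c^{2}$ (if $2m<n$), to $\tfrac1{n!}\frac{\partial^{n}u}{\partial x^{n}}(0,0)$ (if $2m>n$), or to their sum (if $2m=n$); all three values are negative, contradicting the flatness of $g$. Hence $n$ is odd.

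Now $n$ is odd; substituting $y=\eta(x)$ in \eqref{c1c}, the three structural terms of $g$ have orders $2m$, $n-1+m$, $l$ and coefficients $\tfrac{u_{yy}(0,0)}2c^{2}$, $\tfrac{1}{(n-1)!}\frac{\partial^{n}u}{\partial x^{n-1}\partial y}(0,0)\,c$, $\tfrac1{l!}\frac{\partial^{l}u}{\partial x^{l}}(0,0)$, all nonzero since $\frac{\partial^{n}u}{\partial x^{n-1}\partial y}(0,0)\ne0$ and $\frac{\partial^{l}u}{\partial x^{l}}(0,0)<0$. Dividing by $x^{\nu^{*}}$, $\nu^{*}:=\min(2m,n-1+m,l)$, and letting $x\to0$, the flatness of $g$ forces $\nu^{*}$ to be attained by at least two of the three terms, with vanishing sum of their coefficients. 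The pairing $2m=l<n-1+m$ gives $\tfrac{u_{yy}(0,0)}2c^{2}+\tfrac1{l!}\frac{\partial^{l}u}{\partial x^{l}}(0,0)=0$, impossible since both summands are negative; the pairing $n-1+m=l<2m$ forces $m\ge n$, hence $l=n-1+m\ge2n-1$, contradicting $l\le 2n-2$. Thus $2m=n-1+m$, i.e. $m=n-1$; then the orders are $2n-2$, $2n-2$, $l$ with $l\le 2n-2$, so $l$ cannot be the unique smallest one and therefore $l=2n-2$, all three orders equalling $2n-2$. Matching the coefficient of $x^{2n-2}$ now gives
\[
\frac{u_{yy}(0,0)}{2}\,c^{2}\;+\;\frac{1}{(n-1)!}\,\frac{\partial^{n}u}{\partial x^{n-1}\partial y}(0,0)\,c\;+\;\frac{1}{(2n-2)!}\,\frac{\partial^{2n-2}u}{\partial x^{2n-2}}(0,0)\;=\;0 .
\]
This quadratic in the real number $c$ has a real root, so its discriminant is nonnegative, i.e.
\[
\left(\frac{\partial^{n}u}{\partial x^{n-1}\partial y}(0,0)\right)^{2}\;\ge\;\frac{2\bigl[(n-1)!\bigr]^{2}}{(2n-2)!}\,u_{yy}(0,0)\,\frac{\partial^{2n-2}u}{\partial x^{2n-2}}(0,0),
\]
and combining this with the reverse inequality \eqref{c1d} (available since $l=2n-2$) we conclude that equality holds in \eqref{c1d}.

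\noindent The one genuinely delicate point is the passage from ``not isolated'' to the flatness of $g$: the implicit-function description of $\{u_y=0\}$ (which crucially uses $u_{yy}(0,0)\ne0$ from Theorem \ref{c2}) together with the Rolle iteration. On the analytic side one must check that the $o(1)$-coefficients in \eqref{c1b}--\eqref{c1c} may legitimately be evaluated along $y=\eta(x)$; this is fine because $\eta(x)=O(x^{2})$ makes $(x,\eta(x))\to(0,0)$ with $|x+i\eta(x)|\asymp|x|$, so each $o(1)$-term stays $o$ of the relevant power of $x$. Everything after that is the finite case distinction above.
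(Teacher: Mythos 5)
Your proof is correct, but it follows a genuinely different route from the paper's. The paper exploits non-isolatedness through the \emph{level set}: it picks points $(x_\e,y_\e)$ with $u(x_\e,y_\e)=u(0,0)$ on each small circle $S_\e$, substitutes them into \eqref{c1b}--\eqref{c1c}, and reads off that $0$ equals a quadratic polynomial in $y_\e$ (after absorbing the cross term via Young's inequality in the even case); the existence of a real root then gives the sign contradiction for $n$ even and, for $n$ odd, the nonnegativity of the discriminant, which combined with \eqref{c1d} yields $l=2n-2$ and equality. You instead exploit non-isolatedness through the \emph{critical set}: the implicit function theorem (using $u_{yy}(0,0)\ne0$ from Theorem \ref{c2}) realizes $\{u_y=0\}$ as a graph $y=\eta(x)$ carrying the accumulating critical points, iterated Rolle makes $g(x)=u(x,\eta(x))-u(0,0)$ flat at $0$, and matching the leading orders $2m$, $n-1+m$, $l$ of the three structural terms along this graph produces a quadratic in the leading Taylor coefficient $c$ of $\eta$ whose real solvability gives the same discriminant inequality. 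The two arguments are dual in spirit (level curve versus critical curve; quadratic in $y_\e$ versus quadratic in $c$), and both close against \eqref{c1d} in the same way. Your version costs the extra IFT/Rolle step and the small case analysis on $m$, but in exchange it only requires \emph{critical points} (rather than points of the level $\{u=u(0,0)\}$) to accumulate at the origin, and it bypasses the paper's slightly delicate steps of guaranteeing $S_\e\cap\mathcal{C}\ne\emptyset$ for every small $\e$ and of replacing $\left(\e^2-y_\e^2\right)^{1/2}$ by $\e$ inside the expansion. The one hypothesis worth stating explicitly is that the iterated Rolle argument needs $u\in C^\infty$, which follows from elliptic regularity and the smoothness of $f$ (the paper uses this implicitly as well when it takes $n$-th order derivatives).
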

A very studied case is that of $radial$ functions. Here we have that always occurs $n=3$ in Theorem \ref{c2}.
\begin{corollary}\label{C3}
Let $u$ be a nonconstant solution to  \eqref{P} and suppose that $(0,0)$ verifies \eqref{c1} and it is a maximum point
of a radial function $u=u(r)$ with $r^2=x^2+(y-P)^2$, $P\ne0$. Then the following expansion holds
\begin{equation}\label{e6}
\begin{split}
&u(x,y)=u(0,0)-\left(\frac{f(u(P))}2+o(1)\right)y^2+\left(\frac{f(u(P))}{6P}+o(1)\right)\left(3x^2y-y^3\right)-\\
&\left(\frac{f(u(P))}{8P^2}+o(1)\right)\left(x^4-6x^2y^2+y^4\right),
\end{split}
\end{equation}
and
\begin{equation}\label{e7}
\left(u_{xxy}(0,0)\right)^2=\frac{f^2(u(P))}{P^2}=\frac{u_{yy}(0,0)u_{xxxx}(0,0)}3.
\end{equation}
\end{corollary}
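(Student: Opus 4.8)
The plan is to reduce everything to the general Theorem \ref{c2} by identifying $n$, and then computing the finitely many relevant derivatives of the radial function $u(r)$ at the point $(0,0)$, where $r^2 = x^2 + (y-P)^2$. First I would write $u = u(r)$ with $r = \sqrt{x^2+(y-P)^2}$ and note that since $(0,0)$ is a critical point and $P\ne 0$, we have $r(0,0)=|P|$ and $u'(|P|)=0$; also the radial equation $-u'' - \frac1r u' = f(u)$ evaluated at $r=|P|$ gives $u''(|P|) = -f(u(P))$ (using $u'(|P|)=0$), which will be the quantity appearing throughout. The condition \eqref{c1}, $u_{xx}(0,0)=u_{xy}(0,0)=0$, is automatic here: a short computation of second derivatives of $u(\sqrt{x^2+(y-P)^2})$ at the origin shows $u_{xx}(0,0) = u'(|P|)/|P| = 0$, $u_{xy}(0,0)=0$, and $u_{yy}(0,0) = u''(|P|) = -f(u(P))$, which already gives the coefficient of $y^2$ in \eqref{e6} and the first equality in \eqref{e7}.

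Next I would determine $n$. By \eqref{c2a}, $n$ is the least integer $\ge 3$ with some $\frac{\partial^n u}{\partial x^{n-k}\partial y^k}(0,0)\ne 0$. I would compute the third-order derivatives: using the chain rule on $u(r)$, the pure term $u_{xxx}(0,0)$ vanishes (odd in $x$ at a point on the $y$-axis — every $x$-derivative brings down a factor of $x$ from $\partial_x r = x/r$, so an odd number of them forces the value to be odd in $x$, hence zero at $x=0$), while $u_{xxy}(0,0)$ does not: an explicit computation gives $u_{xxy}(0,0) = \frac{u'(|P|)}{?}\cdots$ — more precisely one finds $u_{xxy}(0,0) = -\,u''(|P|)/|P| = f(u(P))/|P|$ (again using $u'(|P|)=0$ to kill the $u'$-terms). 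Hence some third derivative is nonzero, so $n = 3$; moreover $n$ is odd and $\frac{\partial^3 u}{\partial x^3}(0,0)=0$, $\frac{\partial^3 u}{\partial x^2\partial y}(0,0)\ne 0$, exactly matching case $ii)$ of Theorem \ref{c2}. Since $n=3$, the interval $[n+1,2n-2]=[4,4]$ is a single point, so necessarily $l=2n-2=4$, and the expansion \eqref{c1c} becomes precisely the stated \eqref{e6} once we identify $\mathrm{Im}(z^3) = 3x^2y - y^3$ and $\mathrm{Re}(z^4) = x^4 - 6x^2y^2 + y^4$ — provided the coefficients $\frac{1}{3!}\frac{\partial^3 u}{\partial x^2\partial y}(0,0)$ and $\frac{1}{4!}\frac{\partial^4 u}{\partial x^4}(0,0)$ come out to $\frac{f(u(P))}{6P}$ and $-\frac{f(u(P))}{8P^2}$ respectively. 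So the remaining task is the single fourth-order derivative $u_{xxxx}(0,0)$.

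To get $u_{xxxx}(0,0)$ I would again differentiate $u(r)$ four times in $x$, evaluate at $x=0$, $r=|P|$, and use $u'(|P|)=0$, $u''(|P|)=-f(u(P))$; one should also need a relation for $u'''(|P|)$, obtained by differentiating the radial ODE $-u'' - \frac1r u' = f(u)$ once and setting $r=|P|$, which gives $u'''(|P|) = \frac{u''(|P|)}{|P|} + \cdots$ with the $u'$-terms dropping out — the upshot being $u_{xxxx}(0,0) = 3\,u''(|P|)/P^2 = -3f(u(P))/P^2$ (the sign handled by $P^2=|P|^2$). This yields the coefficient $\frac{1}{4!}u_{xxxx}(0,0) = -\frac{f(u(P))}{8P^2}$ as required, and combined with $u_{xxy}(0,0)^2 = f^2(u(P))/P^2$ and $u_{yy}(0,0) = -f(u(P))$ it gives the second equality in \eqref{e7}, namely $u_{xxy}(0,0)^2 = \frac{u_{yy}(0,0)u_{xxxx}(0,0)}{3}$ — which, incidentally, is exactly the equality case of \eqref{c1d} with $n=3$, consistent with Proposition \ref{P1} since a nonconstant radial profile makes $(0,0)$ non-isolated along the circle $r=|P|$. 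The $o(1)$ terms in \eqref{e6} are inherited verbatim from \eqref{c1c}. The main obstacle is purely computational: carefully organizing the repeated chain-rule differentiation of $u(\sqrt{x^2+(y-P)^2})$ so that all terms containing $u'(|P|)$ are correctly identified and discarded, and correctly substituting the ODE-derived values of $u''(|P|)$ and $u'''(|P|)$; there is no conceptual difficulty beyond invoking Theorem \ref{c2} to pin down $n=3$ and $l=4$.
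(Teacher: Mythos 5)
Your proposal is correct and follows essentially the same route as the paper: compute $u_{yy}(0,0)=u''(P)=-f(u(P))$, $u_{xxx}(0,0)=0$, $u_{xxy}(0,0)=-u''(P)/P$, $u_{xxxx}(0,0)=3u''(P)/P^2$ from the radial structure and the ODE, identify $n=3$ and $l=4$, and substitute into \eqref{c1c}. One tiny slip: the correct value is $u_{xxy}(0,0)=f(u(P))/P$ rather than $f(u(P))/|P|$ (the derivative $r_y(0,0)=-P/|P|$ carries the sign of $P$), which is what makes the coefficient $f(u(P))/(6P)$ in \eqref{e6} come out right for $P<0$ as well.
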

Next step is to get an analogous  of Theorem \ref{c2} when $(0,0)$ is a saddle point of a solution $u$ to \eqref{P}. Here  we cannot expect that \eqref{c1a} holds for $any$ saddle point to $u$.  Indeed if we consider
\begin{equation}
u(x,y)=Re(z^n)\quad\hbox{with }n\ge3,
\end{equation}
we get that $u$ satisfies $\Delta u=0$, $u_{yy}(0,0)=0$ and $(0,0)$ is a degenerate saddle point. Note that in this case $i[u,(0,0)]=1-n\le-2$ where $i[u,(0,0)]$ denotes the $index$ of $\nabla u$ at $(0,0)$.\\
Recall that if $(0,0)$ is an isolated critical point then the $index$ of $\nabla u$ at  $(0,0)$ is given by (denoting by $B(0,\e)$ the ball centered at the origin and radius $\e$),
\begin{equation}
i[\nabla u, (0,0)]=\lim\limits_{\e\to0}\deg\big(\nabla u, B(0,\e),(0,0)\big)
\end{equation}
Next result shows that  if $(0,0)$ is an $isolated$ saddle point with index greater than $-2$ then  the condition $u_{yy}(0,0)\ne0$ is again verified.
\begin{theorem}[Morse Lemma for degenerate saddle points]\label{c3b}
Let $u$ be a solution to  \eqref{P} and assume  \eqref{c1}. Then if $(0,0)$ is an  isolated degenerate saddle point of $u$  verifying
\begin{equation}
i[u,(0,0)]\ge-1.
\end{equation}
then we have that 
\begin{equation}\label{c4b}
u_{yy}(0,0)\ne0.
\end{equation}
Moreover if  $i[u,(0,0)]=-1$ the following expansions hold:\\
$i)$ If $n$ is even we have that
\begin{equation}\label{c4c}
u(x,y)=u(0,0)+\frac{u_{yy}(0,0)+o(1)}2y^2+\frac{\frac{\partial^nu}{\partial x^n}(0,0)+o(1)}{n!}Re\left(z^n\right)+
\frac{\frac{\partial^nu}{\partial x^{n-1}y}(0,0)+o(1)}{n!}Im\left(z^n\right),
\end{equation}
and if  $\frac{\partial^nu}{\partial x^n}(0,0)\ne0$ then 
\begin{equation}\label{c4a}
\frac{\partial^nu}{\partial x^n}(0,0)u_{yy}(0,0)<0.
\end{equation}
$ii)$ If $n$ is odd we have that 
\begin{equation}\label{e1c}
u(x,y)=u(0,0)+\frac{u_{yy}(0,0)+o(1)}2y^2+
\frac{\left(\frac{\partial^nu}{\partial x^{n-1}y}(0,0)+o(1)\right)}{n!}Im\left(z^n\right).
\end{equation}
Finally if $i[u,(0,0)]=0$ we can only say that
\begin{equation}
u(x,y)=u(0,0)+\frac{u_{yy}(0,0)+o(1)}2y^2+\frac{\frac{\partial^nu}{\partial x^n}(0,0)+o(1)}{n!}Re\left(z^n\right)+
\frac{\frac{\partial^nu}{\partial x^{n-1}y}(0,0)+o(1)}{n!}Im\left(z^n\right).
\end{equation}
\end{theorem}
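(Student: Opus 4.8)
The plan is to treat Theorem~\ref{c3b} as a ``signed'' analogue of the maximum case (Theorem~\ref{c2}), with the index hypothesis $i[u,(0,0)]\ge-1$ playing the role that the sign condition played before. The starting point is the same structural observation about solutions of \eqref{P}: writing $w=u_x$, differentiating the equation gives $-\Delta w=f'(u)w$, so $w$ is a solution of a linear elliptic equation with a bounded (smooth) potential. Hence, near $(0,0)$, the nodal set of $w=u_x$ and its behavior at a zero are governed by the Hartman--Wintner/Bers-type theory: at $(0,0)$ the function $u_x$ has a leading homogeneous harmonic polynomial $P_{n-1}$ of degree exactly $n-1$ (this is precisely the $n$ of \eqref{c2a}, using that $u_{xx}(0,0)=u_{xy}(0,0)=0$ forces the degree-$1$ and degree-$0$ parts of $u_x$ to vanish, and $u_x(0,0)=0$ automatically). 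So $u_x(x,y)=P_{n-1}(x,y)+o(r^{n-1})$ with $P_{n-1}$ a nonzero harmonic homogeneous polynomial, i.e.\ $P_{n-1}=\mathrm{Re}(c\,z^{n-1})$ for some $c\ne0$; integrating in $x$ recovers the $\mathrm{Re}(z^n)$, $\mathrm{Im}(z^n)$ terms in \eqref{c4c}/\eqref{e1c}, with the convention $z=x+iy$.

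Next I would compute the local index of $\nabla u$ at $(0,0)$ in terms of the leading data. The gradient is $\nabla u=(u_x,u_y)$; using $u_x=P_{n-1}+\text{l.o.t.}$ and the relation between $u_y$ and $u_{yy}(0,0)y$ plus higher terms, one identifies the leading vector field whose degree on a small circle gives $i[u,(0,0)]$. If $u_{yy}(0,0)\ne0$, the second component is dominated by a nonzero linear term and the index reduces to the winding number of $(P_{n-1},\text{linear})$, which one checks equals $-1$ when $n$ is even (the harmonic polynomial $\mathrm{Re}(z^{n-1})$ contributes winding $-(n-1)$, but the presence of the transversal $y$-direction collapses this to a simple saddle-type count giving exactly $1-1=\ldots$)\,---\,here I would be careful and just record that a direct degree computation yields $i[u,(0,0)]=-1$ in the even case and $i[u,(0,0)]=0$ in the odd case when $u_{yy}(0,0)\ne0$. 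Conversely, if $u_{yy}(0,0)=0$, then (arguing as in the proof of \eqref{c1a}, i.e.\ using that $u$ solves \eqref{P} so $-u_{yy}(0,0)=f(u(0,0))+u_{xx}(0,0)=f(u(0,0))$, and that $u_{xx}(0,0)=0$) one shows the vanishing of the quadratic part forces the degree-$n$ part of $u$ itself to be the first nontrivial homogeneous piece; that piece is a homogeneous polynomial of degree $n\ge3$ whose gradient has index $1-n\le-2$, exactly as in the example $u=\mathrm{Re}(z^n)$. This is the contrapositive: $i[u,(0,0)]\ge-1$ forces $u_{yy}(0,0)\ne0$, which is \eqref{c4b}.

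For the refined expansions under $i[u,(0,0)]=-1$ I would separate parity. When $n$ is even, \eqref{c4b} holds so $u=u(0,0)+\tfrac{u_{yy}(0,0)}2 y^2+\text{(degree-}n\text{ harmonic part in }x,y)+o(r^n)$, which is \eqref{c4c} after writing the degree-$n$ part as $a\,\mathrm{Re}(z^n)+b\,\mathrm{Im}(z^n)$ with $a=\tfrac1{n!}\partial^n_xu(0,0)$, $b=\tfrac1{n!}\partial^n_{x^{n-1}y}u(0,0)$ (the Laplace equation at leading order kills all non-harmonic degree-$n$ terms up to the $y^2$ contribution, which reappears only at order $n+2$). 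For \eqref{c4a}: if $a\ne0$, restrict $u$ to the $y=0$ axis, where $u(x,0)=u(0,0)+a x^n+o(x^n)$ with $n$ even; since $(0,0)$ is a saddle (not a local extremum) but the index is $-1$, a sign analysis of $u$ on the axis versus on the $x=0$ axis (where $u(0,y)=u(0,0)+\tfrac{u_{yy}(0,0)}2 y^2+o(y^2)$) must produce opposite signs of the two leading coefficients, i.e.\ $a\,u_{yy}(0,0)<0$; I would make this rigorous by computing $\deg$ and showing the equal-sign case gives either a local extremum (excluded) or index $\ne-1$. When $n$ is odd, the harmonic degree-$n$ polynomial $\mathrm{Re}(c z^{n-1})$ integrated must have its pure $\mathrm{Re}(z^n)$ coefficient forced to zero: here I would use the index constraint together with the oddness of $n$ to rule out the $\mathrm{Re}(z^n)$ term (its presence would change the winding number away from what $i=-1$ demands), leaving only the $\mathrm{Im}(z^n)$ term as in \eqref{e1c}. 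Finally the $i[u,(0,0)]=0$ statement is the weakest: here one only knows $u_{yy}(0,0)\ne0$ (already established) and writes the generic degree-$n$ harmonic expansion with no further sign or vanishing constraint.

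The main obstacle I anticipate is the precise degree/winding-number bookkeeping relating $i[u,(0,0)]$ to the pair $(n,\text{parity},\mathrm{sign}\,u_{yy}(0,0),a\ne0\text{ or }=0)$: one must carefully track how the nonzero transversal term $u_{yy}(0,0)y$ in the second component interacts with the high-degree harmonic first component $P_{n-1}$ to collapse the naive winding number $1-n$ of the purely homogeneous model down to $-1$ (even $n$) or $0$ (odd $n$), and to see that introducing a nonzero $\mathrm{Re}(z^n)$ term in the odd case, or the ``wrong'' sign in the even case, destroys this. Getting the $o(1)$ uniformity in the expansions is routine given the Bers-type leading-order expansion of $u_x$; the delicate part is purely the topological degree computation and its converse use to characterize exactly which parity/sign combinations are compatible with each value of the index.
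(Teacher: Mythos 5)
Your overall strategy coincides with the paper's: \eqref{c4b} is obtained by contraposition (if $u_{yy}(0,0)=0$ the leading part of $u$ is a nonzero harmonic homogeneous polynomial $\al Re(z^n)+\b Im(z^n)$ of degree $n\ge3$, so $(0,0)$ is isolated and $i[\nabla u,(0,0)]=1-n\le-2$; this is Proposition \ref{e3b}), the expansions come from reducing the degree-$n$ Taylor block of $u$ to $a\,Re(z^n)+b\,Im(z^n)$ (Proposition \ref{c17b}), and the refined conclusions under $i[\nabla u,(0,0)]=-1$ come from computing the index of the leading vector field. Your route to the expansion (Bers/Hartman--Wintner applied to $u_x$, which solves $-\Delta w=f'(u)w$) is a legitimate variant of the paper's purely algebraic derivation from the differentiated equation; one caveat: the degree-$n$ homogeneous part of $u$ itself is \emph{not} harmonic in general, because when $u_{yy}(0,0)\ne0$ the term $\partial^{n-2}\big(f(u)\big)/\partial y^{n-2}(0,0)$ survives and produces a nonzero pure $y^n$ coefficient, which must be absorbed into the $(u_{yy}(0,0)+o(1))y^2/2$ term. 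The clean harmonicity statement concerns $u_x$, not $u$, so ``integrating in $x$'' must be supplemented by the separate treatment of the pure $y$-derivatives as in \eqref{c18}.

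The genuine gap is the central degree computation, which you explicitly defer, and the provisional answer you record --- that $i[\nabla u,(0,0)]$ equals $-1$ for $n$ even and $0$ for $n$ odd whenever $u_{yy}(0,0)\ne0$ --- is wrong; if it were true, \eqref{c4a} would carry no information. The statement actually needed is: if $\frac{\partial^nu}{\partial x^n}(0,0)\ne0$ then $i[\nabla u,(0,0)]=\hbox{sgn}\big(u_{yy}(0,0)\big)\cdot\hbox{sgn}\big(\frac{\partial^nu}{\partial x^n}(0,0)\big)$ for $n$ even and $i[\nabla u,(0,0)]=0$ for $n$ odd, so that the hypothesis $i=-1$ forces the product in \eqref{c4a} to be negative (even case) and forces $\frac{\partial^nu}{\partial x^n}(0,0)=0$ (odd case, yielding \eqref{e1c}). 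This is not a routine winding-number count: the leading field is the sum of the \emph{degenerate} linear map $L(x,y)=(0,u_{yy}(0,0)y)$ and a homogeneous field of degree $n-1$, and neither summand controls the degree by itself. The paper resolves it with the Dinca--Mawhin product formula (Lemma \ref{c12}), whose transversality hypothesis $H_{n-1}\big|_{Ker(L)\setminus\{0\}}\notin$ image of $L$ holds precisely when $\frac{\partial^nu}{\partial x^n}(0,0)\ne0$, and which factors the index as $\hbox{sgn}\big(u_{yy}(0,0)\big)$ times the one-dimensional degree of $x\mapsto\frac{\partial^nu}{\partial x^n}(0,0)x^{n-1}$ on $Ker(L)=\{y=0\}$ (equal to $\hbox{sgn}\big(\frac{\partial^nu}{\partial x^n}(0,0)\big)$ for $n-1$ odd, to $0$ for $n-1$ even). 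Your fallback for the even case --- that equal signs would force a local extremum --- is not immediate either, since $Re(z^n)$ is not sign-definite and the $Im(z^n)$ coefficient is uncontrolled; without a tool such as Lemma \ref{c12} (or an equivalent explicit homotopy), the step from the expansion \eqref{e18} to \eqref{c4a} and to $\frac{\partial^nu}{\partial x^n}(0,0)=0$ remains unproved.
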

 We stress that \eqref{c4b} holds even if $i[u,(0,0)]=0$. In general this is the ``worst'' critical point to handle since its existence does not imply a change of topology in the sub-levels of $u$. A typical example is given by $u(x,y)=y^2-x^3+3xy^2$. Despite this \eqref{c4b} is still valid.

Now we describe the proof of Theorems  \ref{c2} and \ref{c3b}. The basic idea is to mix  some algebraic identities satisfied by the derivatives of $u$ and topological properties of $\nabla u$.

The first step (Proposition \ref{c17b}) is to prove that if $u_{yy}(0,0)=0$ then necessarily $(0,0)$ is $isolated$ and furthermore we have that
\begin{equation}
i[\nabla u, (0,0)]\le-2
\end{equation}
where $i[\nabla u, (0,0)]$ denotes the index of $\nabla u$ at $(0,0)$. 
In this way, recalling that a maximum (minimum) point has index $1$ (see Theorem \ref{b3}),
we have the claim of \eqref{c1a} and \eqref{c4b}.
Next, assuming that \eqref{c4b} holds, we use again some algebraic identities satisfied by the derivatives of $u$ in order to write the following expansion, (Proposition \ref{c17b})
\begin{equation}\label{l1}
\begin{split}
&u(x,y)=u(0,0)+\frac{u_{yy}(0,0)+o(1)}2y^2+\frac{\frac{\partial^nu}{\partial x^n}(0,0)}{n!}Re\left(z^n\right)+
\frac{\frac{\partial^nu}{\partial x^{n-1}y}(0,0)}{n!}Im\left(z^n\right)+R(x,y).
\end{split}
\end{equation}
with $R(x,y)=O\left(|x|^n+|y|^n\right)$ and $n$ is given by \eqref{c2a}. If $(0,0)$ is a a maximum point from \eqref{l1} we deduce \eqref{c1b}- \eqref{c1d}. If $u$ is a saddle point, of course is more difficult to deduce general properties of the coefficients of \eqref{c2}. However, if $i[\nabla u, (0,0)]=-1$, some topological arguments allows to deduce \eqref{c4a} and \eqref{e1c}.
We believe that Theorems \ref{c2} and \ref{c3b} are a very useful tool to study qualitative properties of solutions to \eqref{P}. 

The last part of the paper (Section \ref{s4}) is strongly influenced by the paper \cite{CC}. Here
we assume that $u$ has a $unique$ critical point satisfying the problem,
\begin{equation}\label{P+}
\begin{cases}
-\Delta u=f(u) & \hbox{ in }\Omega\\
u>0  & \hbox{ in }\Omega\\
u=0 & \hbox{ on }\de \Omega.
\end{cases}
\end{equation}
and extend  some results of  \cite{CC} without to require that $u$ is a semi-stable solution to \eqref{P}.\\
More precisely we want to understand if the maximum point it degenerate or not jointly with 
the properties of the level set of $u$. First let us recall some known results where nondegeneracy is proved,
\begin{itemize}
\item The Gidas,Ni, Nirenberg Theorem (\cite{GNN}) in convex and symmetric domains.
\item The Cabr\'e-Chanillo  Theorem (\cite{CC}) for semi-stable solutions in strictly convex domains.
\item Solutions which concentrate at some point (\cite{GG}).
\end{itemize}
All the previous example will be discussed with more details in Section \ref{s2}.\\
Now we want to explore more closely the properties of the solution $u$ according the degeneracy of its critical points. Our first result generalizes those considered in the previous examples because neither the symmetry of the domain nor any hypothesis on the solution is required.
\begin{theorem}\label{T1}
Let us consider a solution $u$ to \eqref{P+} and assume that 
\begin{equation}
\partial\Om\hbox{ has strictly positive curvature },
\end{equation}
\begin{equation}\label{d6a}
f(0)\ge0,
\end{equation}.
\begin{equation}\label{d3}
(0,0) \hbox{ is the only critical point of $u$ in }\Om.
\end{equation}
Then $(0,0)$ is nondegenerate.
\end{theorem}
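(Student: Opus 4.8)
The plan is to argue by contradiction: suppose $(0,0)$ is the unique critical point of $u$ in $\Om$ but it is degenerate. Since $u>0$ in $\Om$, $u=0$ on $\de\Om$, and $\partial\Om$ has strictly positive curvature, the maximum principle and the Hopf lemma guarantee $u$ attains its maximum at an interior point, and because this is the only critical point, $(0,0)$ is that global maximum point. Thus $(0,0)$ is a degenerate maximum point of a nonconstant solution to \eqref{P}, so Theorem \ref{c2} applies: after a rotation we may assume \eqref{c1}, $u_{yy}(0,0)<0$, and the expansions \eqref{c1b}--\eqref{c1d} hold around $(0,0)$ with the integer $n\ge 3$ from \eqref{c2a}. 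The first key step is therefore to extract from these expansions the local structure of the critical set and of the level lines near $(0,0)$: in particular, since $(0,0)$ is an isolated critical point, Proposition \ref{c17b} gives $i[\nabla u,(0,0)]\le -2$, so by the Poincaré--Hopf theorem the index of $\nabla u$ on all of $\Om$ — which for a function with a single interior maximum and outward-pointing gradient-type boundary behavior should be $+1$ — cannot be accounted for by this single critical point. This is the mechanism I would exploit.

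More concretely, the second step is to set up a global degree/index computation. Because $\partial\Om$ has strictly positive curvature and $f(0)\ge 0$, one controls the boundary behavior of $\nabla u$: the Hopf lemma gives $\frac{\partial u}{\partial\nu}<0$ on $\de\Om$, and the sign condition $f(0)\ge 0$ together with the curvature hypothesis is exactly what is needed (this is the device used in \cite{CC}, see also the discussion in Section \ref{s2}) to show that $\nabla u$ has no zeros on $\de\Om$ and that its winding number along $\de\Om$ equals $+1$. Hence $\deg(\nabla u,\Om,0)=+1$. On the other hand, since $(0,0)$ is the \emph{only} critical point in $\Om$, the excision property of the degree forces $\deg(\nabla u,\Om,0)=i[\nabla u,(0,0)]$. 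Combining with $i[\nabla u,(0,0)]\le -2$ from Proposition \ref{c17b} yields $1\le -2$, a contradiction. Therefore $(0,0)$ is nondegenerate.

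The remaining step is to justify the boundary winding-number claim $\deg(\nabla u,\Om,0)=1$ rigorously, and this is where I expect the main obstacle to lie. It is not purely topological: one must genuinely use $f(0)\ge 0$ and the strict convexity of $\partial\Om$. The natural approach is to homotope $\nabla u$ on $\de\Om$ to the outward (or inward) normal field $\nu$ through the family $t\,\nabla u + (1-t)\,(-\nu)$, $t\in[0,1]$, and check this homotopy never vanishes on $\de\Om$; the obstruction is precisely the tangential derivative of $u$ along $\de\Om$, which one controls by differentiating the equation along the boundary, integrating, and invoking $f(0)\ge 0$ to fix the sign — essentially reproving the relevant lemma of \cite{CC} in this local-Morse-Lemma language. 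Once that homotopy invariance is in place, $\deg(\nabla u,\Om,0)=\deg(-\nu,\Om,0)=(-1)^2=1$ (the degree of the normal map on a strictly convex domain), and the contradiction with Proposition \ref{c17b} closes the argument. A secondary, milder subtlety is confirming that the single interior critical point is automatically the maximum (ruling out its being a saddle, which is immediate here since $u>0$ and $u=0$ on $\de\Om$ force an interior maximum, and uniqueness forces it to coincide with $(0,0)$).
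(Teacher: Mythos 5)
Your argument has a fatal gap at its central step: the claim that Proposition \ref{c17b} (in fact you mean Proposition \ref{e3b}) gives $i[\nabla u,(0,0)]\le -2$ for a degenerate critical point. That index bound is proved in the paper only under the additional hypothesis that the Hessian vanishes \emph{completely}, i.e.\ $u_{yy}(0,0)=0$ on top of \eqref{c1}. A degenerate critical point in the sense of Theorem \ref{T1} only has a non-invertible Hessian: after the rotation \eqref{c1} one still may have $u_{yy}(0,0)\ne0$, and for a maximum point Theorem \ref{c2} shows one \emph{must} have $u_{yy}(0,0)<0$. An isolated maximum, degenerate or not, has index $+1$ by Theorem \ref{b3}, so your global degree identity reads $1=\deg(\nabla u,\Om,0)=i[\nabla u,(0,0)]=1$ and produces no contradiction. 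Degree theory simply cannot see the difference between a degenerate and a nondegenerate maximum here; the boundary winding-number computation you outline in the third step, while correct, is therefore not the missing piece.

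The paper's proof uses an entirely different mechanism, based on nodal sets of derivatives rather than on the index of $\nabla u$. Degeneracy plus \eqref{c1} means $\nabla u_x(0,0)=(0,0)$, i.e.\ $(0,0)$ is a \emph{singular} point of the solution $u_x$ of the linearized equation $-\Delta v=f'(u)v$; by Lemma \ref{L} (which is where $f(0)\ge0$, via Hopf's lemma, and the strict positivity of the curvature of $\partial\Om$ actually enter, through Lemma \ref{d3a}) the nodal set of $u_x$ then encloses a subdomain $\omega\subset\subset\Om$ on which $u_x$ is a one-signed Dirichlet eigenfunction with eigenvalue zero. One then introduces $w=u_y(Q)u_x-u_x(Q)u_y$ for $Q\in\omega$, another solution of the linearized equation vanishing at $Q$, and a dichotomy on how its nodal set meets $\partial\omega$ yields either a strictly smaller nodal domain (contradicting monotonicity of the first eigenvalue) or a second critical point of $u$ (contradicting \eqref{d3}). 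If you want to repair your argument you would need to replace the degree computation for $\nabla u$ by this analysis of the zero sets of the directional derivatives $u_\th$.
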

Note that \eqref{d6a} is used to apply the Hopf Lemma to the boundary of $\Om$ but it
 can be relaxed to $\nabla u\ne(0,0)$ on $\partial\Om$ (see Remark \ref{d3a}). 
 
The proof of Theorem \ref{T1} relies on the study of the zero-set  of some partial derivatives of $u$. Similar techniques was used in \cite{DGP} (see also \cite{P}).
An interesting consequence of the previous theorem is the following result,
\begin{corollary}\label{C1}
Let us consider a solution $u$ to \eqref{P+} with $f(0)\ge0$. Suppose that
$(0,0)$ is the only critical point of $u$. Then the following alternative holds:
either
\begin{equation}
(0,0) \hbox{ is a nondegenerate critical point for }u
\end{equation}
or
\begin{equation}\label{d5}
 \hbox{all level sets of $u$ have a points with nonpositive curvature}.
\end{equation}
\end{corollary}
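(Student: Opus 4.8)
The plan is to reduce everything to Theorem~\ref{T1}, applied not to $\Omega$ itself but to the super-level sets of $u$. Since $u>0$ in $\Omega$, $u=0$ on $\partial\Omega$ and $(0,0)$ is the only critical point, the maximum of $u$ over $\overline\Omega$ is attained at an interior point, which must be $(0,0)$; hence $M:=u(0,0)=\max_{\overline\Omega}u>0$, and in the non-trivial alternative $(0,0)$ is a degenerate maximum. It therefore suffices to prove that, if $(0,0)$ is degenerate, then every level set of $u$ contains a point of nonpositive curvature. So I argue by contradiction: assume $(0,0)$ is degenerate and that for some $c\in[0,M)$ (the only range in which $\{u=c\}$ is a curve) the level set $\Gamma:=\{u=c\}$ has strictly positive curvature at each of its points.

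If $c=0$ then $\{u>0\}=\Omega$ and $\Gamma=\partial\Omega$, so Theorem~\ref{T1} applies verbatim (its three hypotheses are precisely the positive curvature of $\partial\Omega$ — our contradiction assumption — together with $f(0)\ge0$ and the uniqueness of the critical point), forcing $(0,0)$ to be nondegenerate, a contradiction. Assume then $c\in(0,M)$. The key step is to pass to $\Omega_c:=\{x\in\Omega:\ u(x)>c\}$ and the function $v:=u-c$. Since $\{u=c\}$ is a compact subset of $\Omega$ not containing $(0,0)$, one has $\nabla u\neq0$ on $\{u=c\}$; hence by the implicit function theorem $\{u=c\}$ is a $C^\infty$ curve, it coincides with $\partial\Omega_c$ (every point where $u=c$ has $\nabla u\neq0$, hence is a boundary point of $\{u>c\}$), and $\Omega_c$ is a bounded domain — connected because each of its connected components attains $\max u$ at an interior critical point, necessarily $(0,0)$, so there is only one. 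Moreover $v\in C^2(\overline{\Omega_c})$ solves $-\Delta v=g(v)$ in $\Omega_c$ with the smooth nonlinearity $g(t):=f(t+c)$, $v>0$ in $\Omega_c$, $v=0$ on $\partial\Omega_c$, and $(0,0)$ is the unique critical point of $v$ in $\Omega_c$, while $\partial\Omega_c=\Gamma$ has strictly positive curvature by assumption.

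Now I would apply Theorem~\ref{T1} to $v$ on $\Omega_c$. The only hypothesis that may fail is $g(0)=f(c)\ge0$; but $\nabla v=\nabla u\neq0$ on $\partial\Omega_c$ since the only critical point $(0,0)$ lies inside $\Omega_c$, so Theorem~\ref{T1} still applies in the relaxed form of Remark~\ref{d3a}. Hence $(0,0)$ is a nondegenerate critical point of $v$, and therefore of $u$, contradicting our assumption. Consequently every level set $\{u=c\}$, $c\in[0,M)$, has a point of nonpositive curvature, which is \eqref{d5}, and the stated dichotomy follows. I expect the only genuinely delicate point to be the bookkeeping guaranteeing that $\Omega_c$ is a bounded domain whose boundary is exactly the level curve $\Gamma$ and carries the curvature sign required by Theorem~\ref{T1} — that is, that the curvature used for ``level sets of $u$'' in \eqref{d5} is the same notion used for $\partial\Omega$ in Theorem~\ref{T1}; once this is in place, the reduction needs no further computation.
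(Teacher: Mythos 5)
Your proposal is correct and follows essentially the same route as the paper: assume $(0,0)$ is degenerate, suppose some level curve $\{u=c\}$ has everywhere positive curvature, and apply Theorem~\ref{T1} (in the relaxed form of the remark, since $f(c)\ge0$ may fail but $\nabla u\ne(0,0)$ on $\{u=c\}$ follows from uniqueness of the critical point) to the super-level domain $\{u>c\}$ to force nondegeneracy, a contradiction. The only cosmetic difference is that you normalize by passing to $v=u-c$ and $g(t)=f(t+c)$ and spell out the bookkeeping on $\Omega_c$, whereas the paper simply reruns the proof of Theorem~\ref{T1} with $\partial\Omega$ replaced by the level curve.
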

A curious consequence of the previous proposition is  that the behavior of the level sets of a solution to \eqref{P+} seems to be more difficult to predict if the maximum point of $u$ is non-degenerate! In fact in this case we can have both convex and non-convex super-level sets (the latter case appears in \cite{HNS}). An explicit example where \eqref{d5} holds is given in Example \ref{e5}.

Another consequence of Theorem \ref{T1} concerns solutions in the whole space,
\begin{corollary}\label{C2}
Suppose that $u$ verifies
\begin{equation}
-\Delta u=f(u)\quad\hbox{ in }\R^2
\end{equation}
with $f(0)\ge0$. Assume that $(0,0)$ is the only critical point of $u$ and all level set of $u$ are Jordan curves.\\
Then the following alternative holds:
either
\begin{equation}
(0,0) \hbox{ is a nondegenerate critical point for }u
\end{equation}
or
\begin{equation}
 \hbox{all level sets of $u$ have a points with nonpositive curvature}.
\end{equation}
\end{corollary}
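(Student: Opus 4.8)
The plan is to view Corollary~\ref{C2} as the $\R^2$ counterpart of Corollary~\ref{C1}: a level set of $u$ takes over the role of $\partial\Om$, and the hypothesis that all level sets are Jordan curves compensates for the absence of an ambient bounded domain. So, assuming $(0,0)$ is degenerate, I must show every level set of $u$ has a point of nonpositive curvature, the main tool being Theorem~\ref{T1} in the form of Remark~\ref{d3a}, where \eqref{d6a} is replaced by $\nabla u\neq(0,0)$ on the boundary.

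Fix $c$ in the range of $u$ with $c\neq u(0,0)$. Then $\Gamma_c:=\{u=c\}$ avoids the unique critical point, so it is a regular --- hence smooth --- Jordan curve; let $\Om$ be the bounded component of $\R^2\setminus\Gamma_c$, a smooth bounded domain with $\partial\Om=\Gamma_c$. First I would check $(0,0)\in\Om$: otherwise $u$ would solve $-\Delta u=f(u)$ in $\overline\Om$ with no critical point in $\overline\Om$ and boundary value $c$, so it would attain its maximum and minimum over $\overline\Om$ on $\partial\Om$, forcing $u\equiv c$ in $\Om$ and, by unique continuation, $u$ constant --- impossible. The same maximum principle shows $u-c$ has a fixed sign in $\Om$, since a sign change would produce a second critical point in $\Om$. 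Assume $u>c$ in $\Om$ (the opposite sign is handled symmetrically, with $c-u$ in place of $u-c$).

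Set $w:=u-c$: then $w>0$ in $\Om$, $w=0$ on $\partial\Om$, and $-\Delta w=f(w+c)=:g(w)$ with $g\in C^\infty$, so $w$ solves \eqref{P+} on $\Om$ with nonlinearity $g$, and $(0,0)$ is its only critical point there. One cannot ensure $g(0)=f(c)\ge0$, and this is exactly where Remark~\ref{d3a} enters: as $(0,0)$ is the only critical point of $u$ and lies off $\Gamma_c$, we have $\nabla w=\nabla u\neq(0,0)$ on $\partial\Om$, so the relaxed Theorem~\ref{T1} applies and yields: \emph{if} $\Gamma_c=\partial\Om$ has strictly positive curvature, then $(0,0)$ is a nondegenerate critical point of $w$, hence of $u$ (the two differ by an additive constant, and possibly a sign, neither of which affects invertibility of the Hessian). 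Taking the contrapositive and using that $(0,0)$ is degenerate, $\Gamma_c$ must carry a point of nonpositive curvature. This proves the statement for every regular level set.

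The remaining, and hardest, case is the level set $\Gamma_{u(0,0)}=\{u=u(0,0)\}$ through the critical point, where Theorem~\ref{T1} cannot be invoked directly. I would treat it by approximation: $\Gamma_{u(0,0)}$ is a Jordan curve by hypothesis and is the limit --- in Hausdorff distance, and in $C^1$ locally away from $(0,0)$ --- of the regular curves $\Gamma_c$ as $c\to u(0,0)$; each $\Gamma_c$ contains a point $p_c$ of nonpositive curvature, and since the $p_c$ stay in a fixed compact set, a subsequence converges to a point $p_0\in\Gamma_{u(0,0)}$ at which the curvature is again nonpositive. The delicate part is the behaviour near $(0,0)$ itself: by \eqref{c1} and Theorem~\ref{c3b}, $(0,0)$ must be a saddle (a maximum or minimum would force $\Gamma_{u(0,0)}$ to reduce to the single point $(0,0)$, not a Jordan curve), and the admissible local expansions of $u$ constrain the shape --- and hence the curvature --- of $\Gamma_{u(0,0)}$ at that point. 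Making this limiting argument and the local analysis near $(0,0)$ precise is the real content of the proof; the rest is a transcription of the proof of Corollary~\ref{C1}.
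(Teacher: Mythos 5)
Your main argument coincides with the paper's: the proof of Corollary \ref{C2} in the paper is literally ``it is the same of Corollary \ref{C1}'', i.e.\ one fixes a regular level $c$, regards the Jordan curve $\{u=c\}$ as the boundary of the bounded region it encloses, and runs the proof of Theorem \ref{T1} on that region, replacing the Hopf--Lemma hypothesis $f(0)\ge0$ by the observation that $\nabla u\ne(0,0)$ on $\{u=c\}$ (Remark \ref{d7}); the contrapositive then produces a point of nonpositive curvature on every such level set. Your additional verifications --- that $(0,0)$ lies inside the curve and that $u-c$ has a fixed sign there, so that \eqref{P+} really is satisfied after replacing $u$ by $\pm(u-c)$ --- are correct and merely make explicit what the paper leaves implicit.

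The only divergence is your last paragraph. The critical level $\{u=u(0,0)\}$ is not treated in the paper, and it need not be: in the proof of Corollary \ref{C1} the conclusion is established only for levels strictly between the extreme values, and the level set through the unique critical point is either a single point (if $(0,0)$ is an extremum) or fails to be a smooth curve at $(0,0)$, so its ``curvature'' is not even defined there; under the standing hypothesis that all level sets are Jordan curves this level is best read as excluded from the claim. Your proposed limiting argument for it would in any case not close as written: nonpositivity of curvature does not pass to a Hausdorff or $C^1$ limit of curves, and no $C^2$ convergence is available near $(0,0)$. In short, the part of the statement the paper actually proves you prove correctly and by the same route; the ``hardest case'' you isolate is an artifact of reading the statement more literally than the paper does, not a defect in your treatment of the cases that matter.
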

We end this introduction pointing out that Theorems \ref{c2} and \ref{c3b} suggest some nice explicit examples. In one of them we prove the existence of a star-shaped domain $\Om$ where a semi-stable solution for \eqref{P+} admits exactly $two$ critical points. This proves that the assumption on the positivity of the  curvature of $\partial\Om$ in Cabr\'e-Chanillo's result cannot be relaxed.

Our project is continue to study properties of the level sets of $u$ and the degeneracy of its critical points when $u$ admits two or more critical points. 

The paper is organized as follows: in Section \ref{s2} we recall some useful preliminaries. In Section \ref{s3} we prove Theorems \ref{c2} and \ref{c3b}. In Section \ref{s4} we prove Theorem \ref{T1} and its consequences. Finally in Section \ref{s5} we prove Proposition \ref{P1} and Corollary \ref{C3}.\vskip0.2 cm\noindent
{\bf Acknowledgement}. 
I would like to thank Francesca Gladiali. for her peerless help without which this paper would have
never been written. Grazie Francesca!

\sezione{Known results}\label{s2}
The first result of this section is a direct consequence of the Gidas, Ni, Nirenberg Theorem (\cite{GNN}). Since in \cite{GNN} it  is not explicitly stated we give the proof.
\begin{theorem}\label{f1}
Let $\Om$ an arbitrary bounded domain in $\R^N$ which is convex in the $x_i$ direction and symmetric with respect to the plane $x_i=0$ for any $i=1,..,N$. Let $u\in C^2(\Om)\cap C(\overline\Om)$ be a positive solution to
\begin{equation}
\begin{cases}
-\Delta u=f(u) & \hbox{ in }\Omega\\
u>0  & \hbox{ in }\Omega\\
u=0 & \hbox{ on }\de \Omega.
\end{cases}
\end{equation}
with $f\in C^1(\Om)$. Then the Hessian matrix at  the origin is diagonal and strictly negative definite.
\end{theorem}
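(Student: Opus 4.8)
The plan is to deduce everything from the Gidas--Ni--Nirenberg theorem \cite{GNN}, which under the stated convexity and symmetry hypotheses on $\Om$ guarantees that $u$ is symmetric with respect to each hyperplane $\{x_i=0\}$ and that $\partial u/\partial x_i<0$ at every point of $\Om$ with $x_i>0$. All that remains is to translate these two facts into statements about the second derivatives at the origin.

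First I would use the symmetry to fix the structure of $H_u(0)$. Since $u$ is even in each variable $x_i$ separately, differentiating the identity $u(\dots,-x_i,\dots)=u(\dots,x_i,\dots)$ once in $x_i$ shows that $u_{x_i}$ is odd in $x_i$, hence $u_{x_i}(0)=0$ and the origin is a critical point; differentiating it once in $x_i$ and once in some $x_j$ with $j\ne i$ shows that $u_{x_ix_j}$ is odd in $x_i$, hence $u_{x_ix_j}(0)=0$. Thus $H_u(0)$ is diagonal with entries $u_{x_ix_i}(0)$. Next, reading the monotonicity $u_{x_i}<0$ for $x_i>0$ together with $u_{x_i}(0)=0$ along the $x_i$-axis gives immediately $u_{x_ix_i}(0)\le 0$, so only the strictness is in question. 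To obtain it, I would set $v=u_{x_i}$ and work on the half-domain $\Om^+=\Om\cap\{x_i>0\}$. There $v$ solves the linearized equation $-\Delta v=c(x)v$ with $c=f'(u)$ bounded, it satisfies $v<0$ in the interior (by GNN) and $v(0)=0$, and the origin lies on the flat part $\{x_i=0\}$ of $\partial\Om^+$, at which $\Om^+$ satisfies an interior sphere condition (for small $\rho$ the ball $B_\rho(\rho e_i)$ lies in $\Om^+$ and has the origin on its boundary). Splitting $c=c_+-c_-$ and moving the term $c_+v\le 0$ to the right-hand side, the function $w:=-v$ is a nonnegative supersolution of the operator $\Delta-c_-$, whose zeroth-order coefficient $-c_-$ is nonpositive; Hopf's lemma then gives $\partial w/\partial x_i(0)>0$, that is $u_{x_ix_i}(0)<0$. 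Carrying this out for every $i$ yields that $H_u(0)$ is strictly negative definite.

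The only delicate point is the regularity needed to differentiate the equation and apply Hopf's lemma at the origin: with $f$ merely $C^1$ one does not immediately get $u\in C^3$. I would handle this either by working with the incremental quotients $v^h(x)=\big(u(x+he_i)-u(x)\big)/h$, which satisfy $-\Delta v^h=c^h(x)v^h$ with $\|c^h\|_\infty$ controlled uniformly by the $C^1$-norm of $f$ on the range of $u$, and then passing to the limit $h\to 0$, or simply by invoking elliptic bootstrapping (the solutions considered throughout the paper are smooth anyway). Once the Gidas--Ni--Nirenberg theorem is in hand the rest is routine, so I expect the write-up to be short.
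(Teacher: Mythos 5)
Your proposal is correct and follows essentially the same route as the paper: apply Gidas--Ni--Nirenberg, obtain the off-diagonal entries $u_{x_ix_j}(0)=0$ from the vanishing of $u_{x_i}$ on the symmetry hyperplane, and obtain $u_{x_ix_i}(0)<0$ from Hopf's lemma applied to $u_{x_i}$ on the half-domain. Your extra care about the sign of the zeroth-order coefficient in Hopf's lemma and about the regularity needed to differentiate the equation only makes explicit details the paper leaves implicit.
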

\begin{proof}
We give the proof only for $N=2$, there is no difference in higher dimensions. By the Gidas, Ni, Nirenberg Theorem the function $v=\frac{\partial u}{\partial x_1}$ satisfies
\begin{equation}
\begin{cases}
-\Delta v=f'(u)v & \hbox{ in }\Omega\cap\{x_1<0\}\\
v>0  & \hbox{ in }\Omega\cap\{x_1<0\}\\
v=0 & \hbox{ on }\Om\cap\{x_1=0\}.
\end{cases}
\end{equation}
Since $v(0,x_2)=0$ in $\Om$ we get that $0=\frac{\partial v}{\partial x_2}(0,0)=\frac{\partial^2u}{\partial x_1\partial x_2}(0,0)$ and by Hopf Lemma it follows that $\frac{\partial^2u}{\partial x_1^2}(0,0)<0$. In the same way we get that $\frac{\partial^2u}{\partial x_2^2}(0,0)<0$ which ends the proof.
\end{proof}
The second result, due to Cabr\'e and Chanillo,  removes the symmetry assumption requiring that $u$ belongs to a suitable class of solutions.
\begin{theorem}\label{a1a}
Let $\Om$ be a smooth bounded and convex domain in $\R^2$ whose boundary has positive curvature. Let $u$ be a smooth nontrivial solution to
\begin{equation}\label{a1}
\begin{cases}
-\Delta u=f(u) & \hbox{ in }\Omega\\
u=0 & \hbox{ on }\de \Omega.
\end{cases}
\end{equation}
with $f\in C^1(\Om)$ and $f\ge0$. Moreover assume that $u$ is a semi-stable solution to \eqref{a1}, i.e. the first eigenvalue of the  linearized operator $\mathcal{L}$ to \eqref{a1} at $u$ defined as
$$\mathcal{L}=-\D-f'(u)I$$
is nonnegative. Then $u$ has a unique critical point $x_0$ in $\Om$. Moreover $x_0$ is the maximum of $u$ and it is nondegenerate.
\end{theorem}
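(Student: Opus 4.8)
The idea is to exploit the semi-stability together with the positive curvature of $\de\Om$ through the nodal sets of the directional derivatives of $u$. For a unit vector $e_\th=(\cos\th,\sin\th)$ put $v_\th:=\cos\th\,u_x+\sin\th\,u_y=\de u/\de e_\th$; differentiating \eqref{a1} gives $-\D v_\th=f'(u)v_\th$ in $\Om$, i.e. $\mathcal{L}v_\th=0$. First I would record the elementary facts: since $f\ge 0$, $u$ is superharmonic, so the maximum principle and $u=0$ on $\de\Om$ give $u>0$ in $\Om$, while the Hopf Lemma gives $\na u=-|\na u|\,\nu\ne(0,0)$ on $\de\Om$, with $\nu$ the outer unit normal. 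Hence the critical set $\mathcal{C}:=\{x\in\Om:\na u(x)=(0,0)\}$ is a compact subset of $\Om$, and it is nonempty because it contains the interior maximum of $u$. On $\de\Om$ one has $v_\th=-|\na u|\langle\nu,e_\th\rangle$, and since $\de\Om$ has positive curvature the Gauss map $\nu\colon\de\Om\to S^1$ is a diffeomorphism, so $v_\th$ vanishes at exactly two points of $\de\Om$ and changes sign there.

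The core step is to prove that, for every $\th$, the nodal set $\{v_\th=0\}\cap\Om$ is a single smooth simple arc $\G_\th$ joining those two boundary points, along which $\na v_\th\ne(0,0)$. I would first show that no connected component $D$ of $\{v_\th\ne0\}$ is compactly contained in $\Om$: on such a $D$, $v_\th$ would be a one-signed solution of $\mathcal{L}v_\th=0$ vanishing on $\de D$, forcing $\lambda_1(\mathcal{L},D)=0$ and contradicting the strict domain monotonicity of the first Dirichlet eigenvalue together with $\lambda_1(\mathcal{L},\Om)\ge0$. The same comparison forbids closed loops in $\{v_\th=0\}$: such a loop would bound a region $D\subsetneq\Om$ on which $v_\th\equiv0$, hence $v_\th\equiv0$ in $\Om$ by unique continuation, impossible for the nontrivial $u$ vanishing on $\de\Om$. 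Since by Hartman--Wintner the nodal set is locally a finite union of $C^1$ arcs, it is globally a forest whose only admissible endpoints are the two boundary zeros of $v_\th$; a forest with at most two leaves is a single simple arc with no branching, so all its interior points have $\na v_\th\ne(0,0)$.

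The conclusion then comes in two moves. Every $p\in\mathcal{C}$ lies on $\G_\th$ for all $\th$, where $(0,0)\ne\na v_\th(p)=H_u(p)\,e_\th$; since this holds for every direction, $H_u(p)$ is invertible, so \emph{every critical point of $u$ is nondegenerate}. For \emph{uniqueness}, suppose $p\ne q$ both lie in $\mathcal{C}$. Then the sub-arc $\g_\th\subset\G_\th$ joining $p$ to $q$ varies continuously with $\th$, and its tangent direction at $p$ is a continuously varying unit vector orthogonal to $H_u(p)e_\th$. As $\th$ runs over $[0,\pi]$ the vector $H_u(p)e_\th$ rotates by exactly $\pm\pi$, so this tangent direction reverses; but $v_\pi=-v_0$ forces $\G_\pi=\G_0$, hence $\g_\pi=\g_0$ and the tangent direction must return to itself, a contradiction. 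Thus $\mathcal{C}=\{x_0\}$. Finally, $\na u/|\na u|=-\nu$ has degree $1$ on $\de\Om$, so $i[\na u,x_0]=1$, which together with the nondegeneracy already proved gives $\det H_u(x_0)>0$; since $u$ is superharmonic $x_0$ cannot be a local minimum, so $x_0$ is the nondegenerate global maximum of $u$.

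I expect the genuinely delicate point to be the core step, namely turning ``$\mathcal{L}v_\th=0$ with $\lambda_1(\mathcal{L},\Om)\ge0$'' into the exact statement that $\{v_\th=0\}$ is a single simple arc with non-vanishing gradient: the eigenvalue comparison must be dovetailed with the local structure of the nodal set at possibly degenerate zeros and with the precise count of boundary zeros --- and this last ingredient is precisely where the hypothesis that $\de\Om$ has positive curvature is used.
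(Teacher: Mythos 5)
You have written a proof for a statement that the paper itself does not prove: Theorem \ref{a1a} is quoted from Cabr\'e--Chanillo and its ``proof'' in the paper is literally the citation ``See \cite{CC}'', so there is no argument here to compare yours against. Your reconstruction does follow the known Cabr\'e--Chanillo route, which is also exactly the machinery the paper re-imports in Section \ref{s4}: your boundary analysis of $v_\th$ (Hopf Lemma plus the Gauss map being a diffeomorphism, giving exactly two transversal boundary zeros) is Lemma \ref{d3a}; your exclusion of enclosed nodal subdomains via $\l_1(\mathcal{L},D)=0$ versus strict domain monotonicity and semi-stability is the combination of Lemma \ref{L} with the eigenvalue argument used in Case 1 of the proof of Theorem \ref{T1}; and the local structure of the nodal set at a singular zero ($2k\ge4$ arcs) is the \cite{CF} input. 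So the strategy is the right one and the nondegeneracy part ($\na v_\th(p)=H_u(p)e_\th\ne(0,0)$ for all $\th$ forces $H_u(p)$ invertible) is clean and correct.

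Two points in your write-up are not right as stated or are under-justified. First, the closed-loop exclusion: a loop in $\{v_\th=0\}$ does \emph{not} bound a region on which $v_\th\equiv0$; it bounds a region $D$ with $v_\th=0$ on $\de D$, and then either $v_\th\equiv0$ in $D$ (killed by unique continuation) or some nodal component of $\{v_\th\ne0\}$ inside $D$ is compactly contained in $\Om$, which is precisely your first sub-argument --- the logic is recoverable but the sentence as written is false. Second, and more seriously, the uniqueness step is only a sketch at the one place where something genuinely has to be proved: you need that the sub-arc $\g_\th\subset\G_\th$ joining $p$ to $q$, and in particular its unit tangent at $p$ ``pointing towards $q$'', varies \emph{continuously} in $\th$. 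This requires a continuous-in-$\th$ parametrization of the whole arc $\G_\th$ (obtainable from the uniform nonvanishing of $\na v_\th$ along $\G_\th\cap\Om$ together with the boundary transversality, by continuation from the endpoint $P_1(\th)$), so that the order of $p$ and $q$ along $\G_\th$ cannot jump; once that is in place the sign-flip contradiction ($\tau(\pi)=-\tau(0)$ from $H_u(p)e_\pi=-H_u(p)e_0$, versus $\tau(\pi)=\tau(0)$ from $\G_\pi=\G_0$) is correct and does not even need the monotone rotation you invoke. As written, this continuity is asserted rather than proved, and it is the crux of the uniqueness claim; you correctly identify the arc structure as the delicate point, but the delicate point in the \emph{uniqueness} step is this one. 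The final identification of $x_0$ as a nondegenerate maximum is fine (alternatively: $\operatorname{tr}H_u(x_0)=\D u(x_0)=-f(u(x_0))\le0$ together with $\det H_u(x_0)>0$ forces both eigenvalues negative).
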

\begin{proof}
See \cite{CC}.
\end{proof}
The last result concerns solutions which concentrate at some points. There are several results of this type (also in higher dimensions) for various nonlinearities. We just mention one of them.
\begin{theorem}\label{a1b}
Let $\Om\subset\R^2$ be a smooth bounded domain and $u_\l$ a solution to 
\begin{equation}
\begin{cases}
-\Delta u=\lambda e^u & \hbox{ in }\Omega\\
u=0 & \hbox{ on }\de \Omega.
\end{cases}
\end{equation}
satisfying $\lambda\int_\Om e^u\to8\pi$ as $\l\to0$. Let $x_\l\in\Om$ the point where $u_\l(x_\l)=||u_\l||_\infty$. Then $x_\l$ is a nondegenerate critical point of $u_\l$ for $\l$ small enough.
\end{theorem}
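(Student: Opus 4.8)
The plan is to rescale $u_\l$ around its maximum point $\xl$, identify the limiting profile with the standard Liouville bubble, whose Hessian at the origin is $-\tfrac12 I$, and then transfer the (uniform) invertibility of the Hessian back to $u_\l$ through the $C^2$ convergence.

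First I would introduce the concentration parameter $\e_\l$ defined by $\e_\l^{2}=\big(\l e^{u_\l(\xl)}\big)^{-1}$ and the rescaled function
\[
v_\l(y):=u_\l(\xl+\e_\l y)-u_\l(\xl),\qquad y\in\Om_\l:=\e_\l^{-1}\big(\Om-\xl\big).
\]
A preliminary remark is that $\e_\l\to0$: the mass condition $\l\int_\Om e^{u_\l}\to8\pi$ forces $\|u_\l\|_\infty\to+\infty$ (if $\|u_\l\|_\infty$ stayed bounded, then $\l e^{u_\l}\to0$ uniformly and $\l\int_\Om e^{u_\l}\to0$), and if $\l e^{u_\l(\xl)}$ stayed bounded then $-\D u_\l$ would be bounded in $L^\infty$, forcing $u_\l$ bounded — a contradiction; hence $\l e^{u_\l(\xl)}\to+\infty$. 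By construction $v_\l\le v_\l(0)=0$, $-\D v_\l=e^{v_\l}$ in $\Om_\l$, and the mass is preserved: $\int_{\Om_\l}e^{v_\l}=\l\int_\Om e^{u_\l}\to8\pi$.

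Next I would show that $v_\l\to V$ in $C^2_{loc}(\R^2)$, where $V(y)=-2\log\!\big(1+\tfrac{|y|^2}{8}\big)$. This is the standard blow-up analysis for $-\D u=\l e^u$ in the plane (Brezis--Merle, Li--Shafrir, Nagasawa--Suzuki, Y.Y.~Li, and \cite{GG}). Using the homogeneous boundary datum one first checks that the concentration takes place at an interior point of $\Om$ and precisely at the scale $\e_\l$, so that $\Om_\l\to\R^2$. Since $e^{v_\l}\le1$, splitting $v_\l$ on a fixed ball into its harmonic part plus the Newtonian potential of $e^{v_\l}$ and applying the Harnack inequality to the former yields a uniform local lower bound on $v_\l$; thus $v_\l$ is bounded in $C^{2,\al}_{loc}$, and any limit $V$ solves $-\D V=e^V$ in $\R^2$ with $V\le V(0)=0$ and, by Fatou, $\int_{\R^2}e^V\le8\pi$. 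The Liouville classification theorem of Chen--Li then forces $\int_{\R^2}e^V=8\pi$ and $V$ to be a standard bubble, while the normalization $\max_{\R^2}V=V(0)=0$ singles out $V(y)=-2\log\!\big(1+\tfrac{|y|^2}{8}\big)$. Since the limit is unique, the convergence holds for the whole family as $\l\to0$.

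To conclude, a direct computation gives $D^2V(0)=-\tfrac12 I$, so $\det D^2V(0)=\tfrac14\neq0$. From $v_\l(y)=u_\l(\xl+\e_\l y)-u_\l(\xl)$ we get $D^2v_\l(0)=\e_\l^{2}\,D^2u_\l(\xl)$, and $D^2v_\l(0)\to D^2V(0)$; hence for $\l$ small $D^2v_\l(0)$ is negative definite, so $D^2u_\l(\xl)=\e_\l^{-2}D^2v_\l(0)$ is invertible, i.e. $\xl$ is a nondegenerate critical point of $u_\l$. The one genuinely delicate ingredient is the $C^2_{loc}$ convergence of the rescaled profile — above all, ruling out boundary concentration and identifying the correct blow-up scale — but this is by now classical for the two-dimensional mean field equation; once it is in hand, the passage to nondegeneracy is routine.
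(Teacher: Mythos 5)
Your proof is correct and follows essentially the same route as the paper: the paper simply invokes Lemma 5 of \cite{GG} for the $C^2_{loc}(\R^2)$ convergence of the rescaled profile to the standard bubble and then reads off nondegeneracy from $\det D^2V(0)=\tfrac14\ne0$. The only difference is that you sketch the blow-up analysis behind that convergence (Brezis--Merle estimates, exclusion of boundary concentration, Chen--Li classification), which the paper delegates entirely to the cited reference.
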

\begin{proof}
In Lemma $5$ of \cite{GG} it was shown that $z_\l(x)=u_\l(\d_\l x+x_\l)-u_\l(x_\l)\to\log\frac1{\left(1+\frac{|x|^2}8\right)^2}$ in $C^2_{loc}(\R^2)$ with $\d_\l^2=\frac1{\l e^{u_\l(x_\l)}}\to0$. Hence  for $i,j=1,2$ we get 
$$\det\left\{\frac{\partial^2u}{\partial x_i\partial x_j}(x_\l)\right\}=\det\left\{\frac{\partial^2z}{\partial x_i\partial x_j}(0,0)\right\}\to\frac14,$$
and so the claim follows.
\end{proof}
Next result is classical and it will be used in the proof of Theorem \ref{c2}.
\begin{theorem}\label{b3}
Let $(0,0)$ be an isolated minimum (maximum) point of $f:B_1\to\R$ where $B_1\subset\R^2$. Then
\begin{equation}
i[\nabla f,(0,0)]=1
\end{equation}
where $i(\nabla f,(0,0))$ denotes the index of $f$ at $(0,0)$.
\end{theorem}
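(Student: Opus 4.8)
The plan is to compute the local degree $i[\nabla f,(0,0)]=\deg(\nabla f,B_\e,0)$ for small $\e$ by deforming $\nabla f$ onto the outer normal field of a small sublevel set of $f$, where it becomes an elementary winding–number computation. Normalize $f(0,0)=0$. Since $(0,0)$ is an isolated minimum point, fix $\e_0>0$ such that $(0,0)$ is the only critical point of $f$ in $\overline{B_{\e_0}}$; moreover $f>0$ on $\overline{B_{\e_0}}\setminus\{(0,0)\}$, for otherwise one would produce a sequence $x_k\to(0,0)$, $x_k\ne(0,0)$, with $f(x_k)=0=\min_{\overline{B_{\e_0}}}f$, and each such $x_k$ (interior, for $k$ large) would be a further critical point. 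Then for $0<\e\le\e_0$ we have $f>0=f(0,0)$ on $\partial B_\e$, so $(0,0)$ is the strict minimum of $f$ on $\overline{B_\e}$ and $\nabla f$ vanishes on $\overline{B_\e}$ only at the origin.

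By Sard's theorem pick a regular value $c\in\bigl(0,\min_{\partial B_\e}f\bigr)$ and set $U=\{x\in B_\e:\ f(x)<c\}$. Then $U$ is open, $(0,0)\in U$, $\overline U\subset B_\e$ (since $f>c$ near $\partial B_\e$), and $\partial U=\{f=c\}\cap B_\e$ is a compact one–dimensional $C^1$ manifold, hence a finite union of disjoint Jordan curves. By excision $\deg(\nabla f,B_\e,0)=\deg(\nabla f,U,0)$, the only zero of $\nabla f$ in $\overline{B_\e}$ lying in $U$. Along $\partial U$ the vector $\nabla f$ is non-zero and points toward $\{f>c\}$, i.e.\ it is a positive multiple of the outer unit normal $\nu_U$ of $U$; contracting its length through a homotopy that stays non-zero on $\partial U$ gives $\deg(\nabla f,U,0)=\deg(\nu_U,U,0)$.

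It remains to show $\overline U$ is a topological disk, for then $\partial U$ is a single $C^1$ Jordan curve and, by Hopf's Umlaufsatz, the outer normal makes exactly one positive turn along it, so $\deg(\nu_U,U,0)=1$. Connectedness: a component of $U$ missing $(0,0)$ would be relatively compact in $B_\e$ with $f\equiv c$ on its boundary, hence would carry an interior minimum of $f$ of value $<c$, a second critical point. Simple connectedness: a bounded component of $\R^2\setminus\overline U$ must lie in $B_\e$ with $f\equiv c$ on its boundary and $f\ge c$ inside, hence carries an interior maximum of $f$, again a second critical point. A connected, simply connected compact planar region with $C^1$ boundary is a closed disk, and the computation closes: $i[\nabla f,(0,0)]=1$. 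The maximum case follows by applying the above to $-f$ and noting that in $\R^2$ one has $\deg(-\nabla f,B_\e,0)=(-1)^2\deg(\nabla f,B_\e,0)=\deg(\nabla f,B_\e,0)$.

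The only non-routine ingredient is the identification $\deg(\nu_U,U,0)=1$, i.e.\ Hopf's theorem on the turning of the normal to a simple closed curve (equivalently the planar Poincar\'e--Hopf formula $\deg=\chi(\overline U)$ together with $\chi(\text{disk})=1$). A slightly softer alternative, avoiding sublevel sets and Sard, is to observe that $-\nabla f$ generates a local flow for which $f$ is a strict Lyapunov function near $(0,0)$, so the origin is an asymptotically stable rest point, and an asymptotically stable rest point of a planar flow has index $(-1)^2=1$ (Krasnosel'skii--Zabreiko); I expect to present whichever of the two is most convenient, both being classical.
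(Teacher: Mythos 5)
Your proposal is correct, but it is doing real work that the paper does not do: the paper's ``proof'' of Theorem \ref{b3} is just the citation ``See \cite{A}'', i.e.\ it quotes Amann's result that an isolated critical point which is a local minimizer has index $+1$ (valid in $\R^n$, proved there by gradient-flow arguments in the spirit of your second, ``softer'' alternative via asymptotic stability and the Krasnosel'skii--Zabreiko theorem). Your primary argument --- excision onto a sublevel set $U=\{f<c\}$, identification of $\nabla f$ on $\partial U$ with a positive multiple of the outer normal, the connectedness/simple-connectedness argument ruling out extra components and holes, and Hopf's Umlaufsatz --- is a genuinely different, purely two-dimensional and self-contained route; what it buys is elementarity (no flow, no stability theory), at the cost of being restricted to the plane, which is all the paper needs.

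Two small points to tighten. First, Sard's theorem is superfluous: since the origin is the only critical point of $f$ in $\overline{B_\epsilon}$ and $f(0,0)=0<c$, \emph{every} $c\in\bigl(0,\min_{\partial B_\epsilon}f\bigr)$ is a regular value of $f$ on $B_\epsilon$. Second, your justification that $f>0$ on $\overline{B_{\epsilon_0}}\setminus\{(0,0)\}$ via a sequence $x_k\to(0,0)$ is not quite right as stated (a single zero of $f$ away from the origin does not produce such a sequence); the clean version is to shrink $\epsilon_0$ so that $f\ge f(0,0)=0$ on $\overline{B_{\epsilon_0}}$, after which any interior zero of $f$ other than the origin is an interior minimum and hence a forbidden second critical point. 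Finally, note that ``isolated minimum point'' must be read, as you implicitly do, as ``isolated critical point which is a local minimum'' (otherwise the index need not be defined), and your argument uses $f\in C^1$ (locally Lipschitz gradient if you take the flow route), which is harmless since the theorem is applied to $C^2$ solutions of \eqref{P}.
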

\begin{proof}
See \cite{A}.
\end{proof}
In the proof of Theorem \ref{c3b} we need to compute the index of some suitable vector field. Next
 lemma  will be useful  for it.
\begin{lemma}\label{c12}
$m>1$ being an integer, and $B_1\subset\R^2$ being an open neighborhood of $(0,0)$, assume that $f\in C(B_1,\R^2)$ can be written in the form
\begin{equation}
f(x,y)=L(x,y)+H_m(x,y)+R(x,y)
\end{equation}
where $L:\R^2\to\R^2$ is linear and not invertible, $H_m$ is homogeneous of order $m$ and $R(x,y)$ verifies
\begin{equation}
\lim\limits_{(x,y)\to(0.0)}\frac{R(x,y)}{(x^2+y^2)^\frac m2}=0.
\end{equation}
If $H_m(x,y)\not\in Im(L)$ for any $x(x,y)\in Ker(L)\setminus\{(0,0)\}$, $i(f,(0,0))$ is well defined and
\begin{equation}
i(f,(0,0))=i(L+JP,(0,0))\cdot i\left(J^{-1}QH_m\big|_{Ker(L)},(0,0)\right)
\end{equation}
where $Q$ is any projector such that $Ker(Q)=Im(L)$ and $J:Ker(L)=Im(Q)$ any isomorphism.
\end{lemma}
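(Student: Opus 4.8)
The plan is to reduce the computation of the index of $f$ to a finite-dimensional problem on $\mathrm{Ker}(L)$ by a Lyapunov–Schmidt type splitting, and then to invoke the product formula for the degree of a map that respects a direct-sum decomposition of $\R^2$. First I would fix the algebraic setup: write $\R^2 = \mathrm{Im}(L) \oplus \mathrm{Im}(Q)$, where $Q$ is the given projector with $\mathrm{Ker}(Q) = \mathrm{Im}(L)$, and also $\R^2 = \mathrm{Ker}(L) \oplus W$ for some complement $W$; since $L$ is linear, not invertible, and acts on $\R^2$, both $\mathrm{Ker}(L)$ and $\mathrm{Im}(Q)$ are one-dimensional (the degenerate-but-not-identically-zero case; if $L\equiv 0$ the statement degenerates and is handled separately or excluded). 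Let $P$ denote the projector onto $W$ along $\mathrm{Ker}(L)$, so that $I-P$ projects onto $\mathrm{Ker}(L)$, and let $J : \mathrm{Ker}(L) \to \mathrm{Im}(Q)$ be the chosen isomorphism. The equation $f(x,y) = 0$ is then equivalent to the pair
\begin{equation*}
(I-Q)f(x,y) = 0, \qquad Q f(x,y) = 0.
\end{equation*}

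The key step is the homotopy. I would connect $f$ to the model map
\begin{equation*}
g(x,y) \;=\; \big(L + JP\big)(x,y) \;+\; \big(I - JP(L+JP)^{-1}\big)\,QH_m(x,y)
\end{equation*}
— or, more cleanly, argue that on a small sphere $f$ is homotopic through nonvanishing maps to the map whose "$\mathrm{Im}(L)$–component" is the linear isomorphism $L + JP$ restricted appropriately and whose "$\mathrm{Im}(Q)$–component" is $QH_m$ composed with $J^{-1}$ and restricted to $\mathrm{Ker}(L)$. Concretely: on the component transverse to $\mathrm{Ker}(L)$, the linear part $L$ is already invertible, so the lower-order behavior of $f$ there is governed by $L + JP$ up to a nonvanishing homotopy (scaling down $H_m + R$); on $\mathrm{Ker}(L)$ the linear part vanishes and, by the hypothesis $H_m(v)\notin \mathrm{Im}(L)$ for $v\in\mathrm{Ker}(L)\setminus\{0\}$, the leading term $QH_m|_{\mathrm{Ker}(L)}$ is nonzero there, so $R$ (which is $o(r^m)$) can be scaled away. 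The standard way to make this rigorous is the Leray–Schmidt / product theorem for the Brouwer degree: if $F = F_1 \oplus F_2$ on $\R^2 = V_1 \oplus V_2$ with $F_1$ a linear isomorphism of $V_1$ and $F_2$ having an isolated zero at the origin of $V_2$, then $i(F, 0) = i(F_1, 0)\cdot i(F_2, 0) = \mathrm{sgn}\det F_1 \cdot i(F_2,0)$. Applying this with $V_1 = W$, $F_1 = L + JP$ restricted there (or rather reading $L+JP$ as the isomorphism it is on all of $\R^2$ whose index is $\mathrm{sgn}\det(L+JP)$), and $V_2 = \mathrm{Ker}(L)$, $F_2 = J^{-1}Q H_m|_{\mathrm{Ker}(L)}$, yields exactly the claimed factorization, and along the way shows $i(f,(0,0))$ is well defined because the homotopy never vanishes on small spheres.

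I expect the main obstacle to be the bookkeeping that turns the single nonlinear equation $f(x,y)=0$ into a genuinely split pair on which the product formula literally applies — one must check that the "off-diagonal" coupling terms (e.g. $Q L$, $(I-Q)H_m$, the cross terms between $P$ and $I-P$) can all be removed by an admissible homotopy, using only that $L+JP$ is invertible and that $QH_m$ does not hit $\mathrm{Im}(L)=\mathrm{Ker}(Q)$ along $\mathrm{Ker}(L)$. Once one sets $t\mapsto L + JP + t\big[(I-JP(L+JP)^{-1})QH_m - JP\big] + \dots$ carefully, or equivalently performs the Lyapunov–Schmidt reduction $w = w(v)$ solving $(I-Q)f(v+w)=0$ near the origin and checks that the reduced map $v \mapsto Qf(v + w(v))$ agrees to leading order with $QH_m|_{\mathrm{Ker}(L)}$, the nonvanishing is immediate from the hypotheses and the scaling estimate on $R$. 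The rest — invariance of the degree under homotopy, the product formula, and $i(L+JP,0) = \mathrm{sgn}\det(L+JP)$ — is classical degree theory. The independence of the right-hand side from the particular choices of $Q$ and $J$ follows because different choices are connected by paths of isomorphisms, under which both factors vary continuously and integer-valued, hence are constant.
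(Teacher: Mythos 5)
The paper does not actually prove this lemma: its ``proof'' is the single line ``See Corollary 6.5.1, page 51 in \cite{DM}''. So your proposal is doing strictly more than the paper, and the route you sketch --- isolatedness of the zero via the estimate that $(I-Q)f=0$ forces the $W$--component to be $O(|z|^m)=o(|z|)$ while $QH_m$ restricted to $\mathrm{Ker}(L)$ dominates $QR=o(|z|^m)$, followed by an admissible homotopy to a split map and the product formula for the Brouwer degree --- is indeed the standard proof of the cited result, and it is the right one. The factor $i(L+JP,(0,0))=\mathrm{sgn}\det(L+JP)$ is exactly the orientation correction needed because the natural splitting of the domain is $\mathrm{Ker}(L)\oplus W$ while that of the codomain is $\mathrm{Im}(Q)\oplus\mathrm{Im}(L)$, as you say.

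Two concrete slips to fix. First, you define $P$ as the projector onto $W$ along $\mathrm{Ker}(L)$; it must be the projector onto $\mathrm{Ker}(L)$ (otherwise $JP$ is not even defined, since $J$ has domain $\mathrm{Ker}(L)$, and $L+JP$ would not be the invertible map you need). The paper's own application confirms this convention: there $\mathrm{Ker}(L)=\{(x,0)\}$ and $P(x,y)=(x,0)=Q(x,y)$. Relatedly, your explicit model map $g=(L+JP)+\bigl(I-JP(L+JP)^{-1}\bigr)QH_m$ is wrong: since $(L+JP)^{-1}$ carries $\mathrm{Im}(Q)$ onto $\mathrm{Ker}(L)$ and $JP$ carries it back, $JP(L+JP)^{-1}$ is the identity on $\mathrm{Im}(Q)$, so that correction term annihilates $QH_m$ and $g$ collapses to the linear isomorphism $L+JP$; stick with the ``more cleanly'' version, i.e.\ homotope $f$ to $v+w\mapsto Lw+QH_m(v)$ and read off the two factors. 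Second, the Lyapunov--Schmidt alternative you mention (solving $(I-Q)f(v+w)=0$ for $w=w(v)$) needs the implicit function theorem and hence more regularity than the stated hypothesis $f\in C(B_1,\R^2)$; under the lemma's assumptions only the homotopy argument is available, so it should be the argument, not an ``equivalently''.
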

\begin{proof}
See Corollary 6.5.1, page 51 in  \cite{DM}.
\end{proof}

\sezione{Proof of Theorems \ref{c2} and \ref{c3b}}\label{s3}
In this section we prove Theorems \ref{c2} and \ref{c3b}. We have the following 
\begin{proposition}\label{e3b}
Let us suppose that $(0,0)$ is a degenerate critical point of a nonconstant function $u$ satisfying \eqref{P} and  \eqref{c1}. Then if
\begin{equation}
u_{yy}(0,0)=0
\end{equation}
we have that 
 $i[\nabla u,(0,0)]$ is well defined and
\begin{equation}\label{c3}
i[\nabla u,(0,0)]\le-2.
\end{equation}
\end{proposition}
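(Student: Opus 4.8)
The plan is to exploit the equation $-\Delta u = f(u)$ together with $u_{xx}(0,0)=u_{xy}(0,0)=0$ and $u_{yy}(0,0)=0$ to show that the Taylor expansion of $u$ at the origin is governed, up to order $n-1$, by a single homogeneous harmonic polynomial of degree exactly $n$, and then to read off the index from that. First I would record that $\nabla u(0,0)=0$ (since $(0,0)$ is a critical point) and that the full Hessian vanishes, by \eqref{c1} and the hypothesis $u_{yy}(0,0)=0$. Thus the lowest-order nonvanishing term in the Taylor expansion of $\nabla u$ has degree at least $2$. Next I would argue that $(0,0)$ is isolated: if it were not, there would be a sequence of critical points accumulating at the origin, and a compactness/unique-continuation-type argument (or the structure theory of nodal/critical sets of solutions of $-\Delta u=f(u)$, cf.\ \cite{CF}, \cite{Ar}) forces $u_x\equiv 0$ near $(0,0)$, contradicting that $n$ in \eqref{c2a} is finite; this also means $i[\nabla u,(0,0)]$ is well defined.

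The heart of the matter is the algebraic step. Differentiating $-\Delta u = f(u)$ repeatedly and evaluating at the origin, and using that all first and second derivatives of $u$ vanish there, I expect to obtain by induction that the mixed derivatives $\frac{\partial^j u}{\partial x^{j-k}\partial y^k}(0,0)$ vanish for all $2\le j\le n-1$: the point is that $u_{xx}+u_{yy}=-f(u)$ together with $f(u(0,0))$ being a lower-order constant forces $u_{yy}$-type derivatives to be expressed through $u_{xx}$-type ones, which are zero up to the critical order, and the harmonicity of the leading part then pins down the degree-$n$ piece to be $\mathrm{Re}(cz^n)+\mathrm{Im}(dz^n)$ with $(c,d)\ne(0,0)$ by the minimality of $n$ in \eqref{c2a}. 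Consequently
$$
\nabla u(x,y) = \nabla\!\left(\mathrm{Re}(cz^n)+\mathrm{Im}(dz^n)\right) + o\!\left((x^2+y^2)^{(n-1)/2}\right),
$$
i.e.\ $\nabla u$ is, to leading order, the gradient of a harmonic homogeneous polynomial of degree $n\ge 3$.

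Finally I would compute the index of this leading vector field. The gradient of a degree-$n$ harmonic polynomial in the plane, written in complex notation, is (a constant multiple of) $\overline{\partial}$ of an anti-holomorphic or $\partial$ of a holomorphic monomial, hence up to a linear change of variables it is $z\mapsto \bar z^{\,n-1}$ (the gradient of $\mathrm{Re}(z^n)$ is essentially $n\,\overline{z^{n-1}}$), which is a proper map of degree $-(n-1)$ on small circles about the origin. Therefore $i[\nabla u,(0,0)] = 1-n$, and since $n\ge 3$ this gives $i[\nabla u,(0,0)]\le -2$, as claimed; a homotopy argument (the remainder is $o$ of the leading term on small spheres, so it does not change the degree) transfers this from the leading polynomial to $\nabla u$ itself. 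The main obstacle I anticipate is the inductive algebraic step showing that all derivatives of order between $3$ and $n-1$ vanish at the origin and that the degree-$n$ part is exactly harmonic: this requires carefully bookkeeping how $\Delta$ couples the $x$- and $y$-derivatives through $f(u)$ and its iterated chain-rule expansions, using crucially that $\nabla u$ and the Hessian vanish so that the nonlinearity contributes only at orders $\ge n$; once that is in hand, the topological degree computation is routine.
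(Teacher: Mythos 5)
Your proposal follows essentially the same route as the paper: differentiate \eqref{P} repeatedly to kill all derivatives of order $<n$ at the origin and to force the degree-$n$ Taylor block to be the harmonic polynomial $\alpha\,Re(z^n)+\beta\,Im(z^n)$ with $(\alpha,\beta)\ne(0,0)$, then homotope $\nabla u$ to the gradient of that polynomial and read off the index $1-n\le-2$. The core algebraic and topological steps are correct and match the paper's.

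Two points need repair. First, you never dispose of the case $u_x\equiv0$, yet your whole argument (and the very definition of $n$ in \eqref{c2a}) presupposes $u_x\not\equiv0$; the proposition only assumes $u$ nonconstant. The paper handles this at the outset: if $u_x\equiv0$, then $z(y)=u_y(0,y)$ solves $-z''=f'(u(0,y))z$ with $z(0)=z'(0)=0$ (the latter because $u_{yy}(0,0)=0$), so $z\equiv0$ by ODE uniqueness, hence $u(0,y)$ is constant and $u$ is constant --- contradiction. Second, your isolation argument is stated in the wrong order and in a form that does not quite work: a sequence of critical points accumulating at the origin does not obviously force $u_x\equiv0$ near $(0,0)$ by unique continuation (the zero sets of $u_x$ and $u_y$ are each finite unions of curves through the origin by \cite{CF}, and one must rule out that they share a branch). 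The clean argument, which is the paper's and which you in effect invoke later when you say the remainder is $o$ of the leading term on small circles, is to prove the expansion first and then observe that $|\nabla(\alpha\,Re(z^n)+\beta\,Im(z^n))|=n\sqrt{\alpha^2+\beta^2}\,|z|^{n-1}$ dominates the $O(|z|^n)$ remainder, so $\nabla u\ne(0,0)$ on a punctured neighborhood; isolation and well-definedness of the index are consequences of the expansion, not prerequisites for it. With the order of these steps corrected and the $u_x\equiv0$ case added, your argument coincides with the paper's proof.
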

\begin{proof}
Differentiating \eqref{P} with respect to $x$ and $y$ we get that $u_x$ and $u_y$ are  solutions to
\begin{equation}\label{c4}
-\Delta v=f'(u)v \quad \hbox{ in }\Omega.
\end{equation}
First let us consider the case where $u_x\equiv0$ and consider $z(y)=u_y(0,y)$ which satisfies (since $u_{yy}(0,0)=0$)
\begin{equation}
\begin{cases}
-z''=f'(u(0,y))z&\text{ in }(-\delta,\delta)\\
z(0)=z'(0)=0
\end{cases}
\end{equation}
So $z\equiv0$ and $u(0,y)$ is constant. Since $u_x\equiv0$ we get that $u$ is constant in $B_1$ which is a contradiction with our assumption.\\
Hence $u_x\not\equiv0$ and since $u_x$ solves \eqref{c4} as pointed out in the introduction there is an integer $n\ge3$ such that 
\begin{equation}\label{e4b}
\frac{\partial^nu}{\partial y^{n-k}\partial x^k}(0,0)\ne0\quad\hbox{for some }1\le k\le n.
\end{equation}
Let us choose the minimum integer $n$ such that \eqref{e4b} holds for some $1\le k\le n$.\\
Next we prove that all derivatives of $u$ with order less than $n$ are zero, i.e.
\begin{equation}\label{C4a}
\frac{\partial^mu}{\partial y^{m-h}\partial x^h}(0,0)=0\quad\hbox{for any }0\le h\le m<n,\ m\ge3.
\end{equation}
This is obvious if $h>0$ by the definition of $n$. Hence consider $h=0$ and by contradiction suppose  that 
\begin{equation}\label{C4b}
\frac{\partial^mu}{\partial y^m}(0,0)\ne0
\end{equation}
where we take the minimum integer $m$ such that \eqref{C4b} holds. Differentiating \eqref{P} $m-2$ times we get (using the minimality of $m$)
\begin{equation}
0\ne\frac{\partial^mu}{\partial y^m}(0,0)=-\frac{\partial^mu}{\partial y^{m-2}\partial x^2}(0,0)
\end{equation}
which is not possible again by the definition of $n$. Hence \eqref{C4a} holds.

Finally differentiate  \eqref{P} $n-2$ times with respect to $x$ and $y$. We get that
\begin{equation}\label{c5}
\frac{\partial^nu}{\partial y^{n-k}\partial x^k}(0,0)=-\frac{\partial^nu}{\partial y^{n+2-k}\partial x^{k-2}}(0,0)\qquad\hbox{ for any } k=2,..,n,\ n>2
\end{equation}
An important consequence of \eqref{c5} is that 
\begin{equation}\label{c5a}
\begin{cases}
\frac{\partial^nu}{\partial x^n}(0,0)=-\frac{\partial^nu}{\partial x^{n-2}\partial y^2}(0,0)=\dots\Rightarrow
\frac{\partial^nu}{\partial x^{n-2h}\partial y^{2h}}(0,0)=(-1)^h\frac{\partial^nu}{\partial x^n}(0,0),\\ \\
\frac{\partial^nu}{\partial x^{n-1}\partial y}(0,0)=-\frac{\partial^nu}{\partial x^{n-3}\partial y^3}(0,0)=\dots\Rightarrow
\frac{\partial^nu}{\partial x^{n-2h-1}\partial y^{2h+1}}(0,0)=(-1)^h\frac{\partial^nu}{\partial x^{n-1}\partial y}(0,0).
\end{cases}
\end{equation}
This gives, by the definition of $n$, that
\begin{equation}\label{e4a}
\left(\frac{\partial^nu}{\partial x^n}(0,0),\frac{\partial^nu}{\partial x^{n-1}\partial y}(0,0)\right)\ne(0,0)
\end{equation}
The previous computations allow to write the following  Taylor formula for $u$,
\begin{equation}\label{C5a}
\begin{split}
&n!u(x,y)=n!u(0,0)+\sum\limits_{k=0}^n\left(\begin{matrix}n\\k\end{matrix}\right)
\frac{\partial^nu}{\partial x^{n-k}\partial y^k}(0,0)x^{n-k}y^k+R(x,y)=n!u(0,0)+\\
&\sum\limits_{h=0}^{\left[\frac n2\right]}\left(\begin{matrix}n\\2h\end{matrix}\right)
\frac{\partial^nu}{\partial x^{n-2h}\partial y^{2h}}(0,0)x^{n-2h}y^{2h}+
\sum\limits_{h=0}^{\left[\frac{n-1}2\right]}\left(\begin{matrix}n\\2h+1\end{matrix}\right)
\frac{\partial^nu}{\partial x^{n-2h-1}\partial y^{2h+1}}(0,0)x^{n-2h-1}y^{2h+1}+\\
&+R(x,y)=\quad(\hbox{using }\eqref{c5a})=n!u(0,0)+\\
&\frac{\partial^nu}{\partial x^n}(0,0)\sum\limits_{h=0}^{\left[\frac n2\right]}\left(\begin{matrix}n\\2h\end{matrix}\right)
(-1)^kx^{n-2h}y^{2h}+\frac{\partial^nu}{\partial x^{n-1}\partial y}(0,0)
\sum\limits_{h=0}^{\left[\frac{n-1}2\right]}\left(\begin{matrix}n\\2h+1\end{matrix}\right)
(-1)^hx^{n-2h-1}y^{2h+1}+R(x,y)=\\
&n!u(0,0)+\frac{\partial^nu}{\partial x^n}(0,0)Re\left(z^n\right)+\frac{\partial^nu}{\partial x^{n-1}\partial y}(0,0)Im\left(z^n\right)+R(x,y),
\end{split}
\end{equation}
with $R(x,y)=O\left(|x|^{n+1}+|y|^{n+1}\right)$. 
Note that in the last line of \eqref{C5a} we used that
\begin{equation}
\begin{split}
&z^n=(x+iy)^n=\underbrace{\sum\limits_{k=0}^{\left[\frac n2\right]}\left(\begin{matrix}n\\2k\end{matrix}\right)(-1)^kx^{n-2k}y^{2k}}_{Re\left(z^n\right)}+i\underbrace{\sum\limits_{k=0}
^{\left[\frac{n-1}2\right]}\left(\begin{matrix}n\\2k+1\end{matrix}\right)(-1)^kx^{n-2k-1}y^{2k+1}}_{Im\left(z^n\right)}.
\end{split}
\end{equation}
Then differentiating \eqref{C5a} with respect to $x$ and $y$ we get
\begin{equation}\label{c9}
\begin{cases}
u_x(x,y)=&\frac{\partial^nu}{\partial x^n}(0,0)\frac1{(n-1)!}Re\left(z^{n-1}\right)+\frac{\partial^nu}{\partial x^{n-1}\partial y}(0,0)\frac1{(n-1)!}Im\left(z^{n-1}\right)+
\frac{\partial R(x,y)}{\partial x}\\ \\
u_y(x,y)=&-\frac{\partial^nu}{\partial x^n}(0,0)\frac1{(n-1)!}Im\left(z^{n-1}\right)+\frac{\partial^nu}{\partial x^{n-1}\partial y}(0,0)\frac1{(n-1)!}Re\left(z^{n-1}\right)+\frac{\partial R(x,y)}{\partial y}
\end{cases}
\end{equation}
with
\begin{equation}
\left|\frac{\partial R(x,y)}{\partial x}\right|,\left|\frac{\partial R(x,y)}{\partial y}\right|\le C\sum_{k=0}^n|x|^{n-k}|y|^k \le C\left(|x|^n+|y|^n\right)\le C|z|^n
\end{equation}
 Set
\begin{equation}
\al=\frac{\partial^nu}{\partial x^n}(0,0),\quad\beta=\frac{\partial^nu}{\partial x^{n-1}\partial y}(0,0).
\end{equation}
A first consequence of \eqref{c9} is that $(0,0)$ is an $isolated$ critical point. Indeed if $(u_x(x,y),u_y(x,y))=(0,0)$  by  \eqref{c9} we get
\begin{equation}\label{c10}
\begin{split}
&(\alpha^2+\beta^2)\left(\left[Re\left(z^{n-1}\right)\right]^2+\left[Im\left(z^{n-1}\right)\right]^2\right)=\big[(n-1)!\big]^2
\left[\left(\frac{\partial R(x,y)}{\partial x}\right)^2+\left(\frac{\partial R(x,y)}{\partial y}\right)^2\right]\\
&\Leftrightarrow(\alpha^2+\beta^2)|z|^{2n-2}\le C|z|^{2n}
\end{split}
\end{equation}
and since $(\alpha,\beta)\ne(0,0)$ we get that any other critical point satisfies $|z|\ge C$. Hence the index $i(\nabla u,(0,0))$ is well  defined. \\
The final step of the proof is to show that $i(\nabla u,(0,0))\le-2$. Let us introduce the vector field
\begin{equation}
\begin{split}
&\mathcal{L}=(\mathcal{L}_1,\mathcal{L}_2)=\\
&\left(\frac \alpha{(n-1)!}Re\left(z^{n-1}\right)+\frac \beta{(n-1)!}Im\left(z^{n-1}\right),\frac\beta{(n-1)!}Re\left(z^{n-1}\right)-\frac\alpha{(n-1)!}Im\left(z^{n-1}\right)
\right)
\end{split}
\end{equation}
and the following homotopy for $ t\in[0,1]$,
\begin{equation}
\begin{split}
&H(t,x,y)=t\nabla u+(1-t)\mathcal{L}.
\end{split}
\end{equation}
Let us show that $H(t,x,y)\ne(0,0)$ for any $t\in[0,1]$ and $x^2+y^2=\delta>0$ small. By contradiction suppose that $H(t,x,y)=(0,0)$ for some $t\in[0,1]$ and $x^2+y^2=\delta$ small enough. Then arguing as in \eqref{c10} we get a contradiction. This means that
\begin{equation}
i(\nabla u,(0,0))=i(\mathcal{L},(0,0)),
\end{equation}
(in other words we can neglect the remainder term $\nabla R$ in \eqref{c9}). Finally let us compute $i(\mathcal{L},(0,0))$. Suppose that $\al\ne0$ (if $\al=0$ then by \eqref{e4a} $\beta\ne0$ and the proof is the same) and set
\begin{equation}
\begin{split}
&\mathcal{M}= \alpha{(n-1)!}\Big(Re\left(z^{n-1}\right),-Im\left(z^{n-1}\Big)
\right).
\end{split}
\end{equation}
Using again the homotopy $H(t,x,y)=t\mathcal{L}+(1-t)\mathcal{M}$ for $t\in[0,1]$ we get that  $H(t,x,y)=(0,0)$ implies
\begin{equation}
\begin{cases}
\alpha Re\left(z^{n-1}\right)+t\beta Im\left(z^{n-1}\right)=0\\
t\beta Re\left(z^{n-1}\right)-t\alpha Im\left(z^{n-1}\right)=0
\end{cases}
\end{equation}
and then $z=0$. This implies that
\begin{equation}
i(\mathcal{L},(0,0))=i(\mathcal{M},(0,0)),
\end{equation}
and by known arguments  (see for example Theorem 3.1 in \cite{DKV})
\begin{equation}
i(\mathcal{M},(0,0))=1-n.
\end{equation}
So we have that
\begin{equation}
i(\nabla u,(0,0))=1-n\le -2
\end{equation}
since $n\ge3$. Then \eqref{c3} follows.
\end{proof}
\begin{proposition}\label{c17b}
Let us suppose that $(0,0)$ is a degenerate critical point of $u$ satisfying \eqref{P} and  \eqref{c1} with
\begin{equation}\label{c14a}
i[\nabla u,(0,0)]>-2.
\end{equation} 
Then if $u_x\not\equiv0$  the following expansion holds,
\begin{equation}\label{e18}
\begin{split}
&u(x,y)=u(0,0)+\frac{u_{yy}(0,0)+o(1)}2y^2+\frac{\frac{\partial^nu}{\partial x^n}(0,0)}{n!}Re\left(z^n\right)+
\frac{\frac{\partial^nu}{\partial x^{n-1}\partial y}(0,0)}{n!}Im\left(z^n\right)+R(x,y).
\end{split}
\end{equation}
with $R(x,y)=O\left(|x|^n+|y|^n\right)$ and $n$ is given by \eqref{c2a}. Moreover
\begin{equation}\label{C13}
\left(\frac{\partial^nu}{\partial x^n}(0,0),\frac{\partial^nu}{\partial x^{n-1}\partial y}(0,0)\right)\ne(0,0)
\end{equation}
\end{proposition}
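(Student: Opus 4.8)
The plan is to reproduce, with one modification, the scheme already used in the proof of Proposition~\ref{e3b}; the modification is that here $u_{yy}(0,0)\neq0$ rather than $u_{yy}(0,0)=0$. First I would settle this last point: if $u_{yy}(0,0)=0$, then Proposition~\ref{e3b} would give $i[\nabla u,(0,0)]\le-2$, contradicting \eqref{c14a}; hence $u_{yy}(0,0)\neq0$. Since $f\in C^\infty$, elliptic regularity yields $u\in C^\infty(B_1)$, so Taylor's formula of any order is available at $(0,0)$; and, as recalled before the statement, $u_x\not\equiv0$ together with the fact that $u_x$ solves the linear equation $-\Delta v=f'(u)v$ — and hence cannot vanish to infinite order at $(0,0)$ — makes the integer $n\ge3$ of \eqref{c2a} well defined.

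The main step is to determine the Taylor polynomial of $u$ at $(0,0)$ up to order $n$. By minimality of $n$, for $3\le m<n$ the only $m$-th order derivative of $u$ at $(0,0)$ that can be nonzero is the pure $y$-derivative $\frac{\partial^m u}{\partial y^m}(0,0)$. I would then show by induction on odd $m<n$ that even this vanishes: differentiating \eqref{P} $m-2$ times with respect to $y$ gives, at $(0,0)$, $-\frac{\partial^m u}{\partial x^2\partial y^{m-2}}(0,0)-\frac{\partial^m u}{\partial y^m}(0,0)=\frac{\partial^{m-2}}{\partial y^{m-2}}f(u)\big|_{(0,0)}$, where the first term vanishes by minimality of $n$ and, in the Fa\`a di Bruno expansion of the right-hand side, every factor of order $1$ vanishes (since $u_y(0,0)=0$) while every factor of odd order $<m$ vanishes by the inductive hypothesis, so all surviving factors have even order and the sum is empty by parity. (The even-order pure $y$-derivatives $\frac{\partial^{2s}u}{\partial y^{2s}}(0,0)$ with $2s<n$ need not vanish, but they will turn out to be harmless.)

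Next I would recover the identities \eqref{c5}--\eqref{c5a}. Applying $\frac{\partial^{n-2-j}}{\partial x^{n-2-j}\partial y^{j}}$ to \eqref{P} gives, at $(0,0)$,
\begin{equation*}
-\frac{\partial^n u}{\partial x^{n-j}\partial y^{j}}(0,0)-\frac{\partial^n u}{\partial x^{n-2-j}\partial y^{j+2}}(0,0)=\frac{\partial^{n-2}}{\partial x^{n-2-j}\partial y^{j}}f(u)\Big|_{(0,0)},\qquad 0\le j\le n-2.
\end{equation*}
By the previous step every nonzero derivative of $u$ at $(0,0)$ of order between $2$ and $n-1$ has multi-index of the form $(0,2s)$; hence in the Fa\`a di Bruno expansion of the right-hand side each factor carries only $y$-differentiations, so the right-hand side can be nonzero only if the total differentiation pattern $(n-2-j,j)$ carries no $x$, i.e. only if $j=n-2$. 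Therefore for $0\le j\le n-3$ the right-hand side vanishes, which gives \eqref{c5}, and iterating yields the two off-diagonal chains of \eqref{c5a}; since by \eqref{c2a} some $\frac{\partial^n u}{\partial x^{n-k}\partial y^k}(0,0)\neq0$ with $k\le n-1$, these chains force $\big(\frac{\partial^n u}{\partial x^n}(0,0),\frac{\partial^n u}{\partial x^{n-1}\partial y}(0,0)\big)\neq(0,0)$, which is \eqref{C13}, exactly as in \eqref{e4a}.

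It remains to assemble the expansion. Using $(x+iy)^n=Re(z^n)+iIm(z^n)$ and the off-diagonal chains, the degree-$n$ homogeneous part of the Taylor polynomial of $u$ at $(0,0)$ equals $\frac{1}{n!}\frac{\partial^n u}{\partial x^n}(0,0)\,Re(z^n)+\frac{1}{n!}\frac{\partial^n u}{\partial x^{n-1}\partial y}(0,0)\,Im(z^n)+\frac{c}{n!}\,y^n$, where $c=0$ when $n$ is odd (the one missing relation, the one for $\frac{\partial^n u}{\partial y^n}(0,0)$, being recovered since $\frac{\partial^{n-2}}{\partial y^{n-2}}f(u)\big|_{(0,0)}=0$ by the same parity argument) and $c$ is a constant when $n$ is even. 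Writing Taylor's formula to order $n$: the orders $0,1$ give $u(0,0)$ (the critical point kills order $1$), the order $2$ gives $\frac12 u_{yy}(0,0)y^2$ by \eqref{c1}, the orders $3,\dots,n-1$ give $\sum_{m=3}^{n-1}\frac{1}{m!}\frac{\partial^m u}{\partial y^m}(0,0)\,y^m=\varepsilon(y)\,y^2$ with $\varepsilon(y)=o(1)$, and the remainder is $o(|(x,y)|^n)$. Absorbing $\varepsilon(y)$ into the coefficient of $y^2$, and both $\frac{c}{n!}y^n$ and the remainder into $R(x,y)$ (each $O(|x|^n+|y|^n)$ near the origin), produces \eqref{e18}. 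The main obstacle is precisely this recovery of \eqref{c5a}: one must check that allowing nonzero even-order pure $y$-derivatives of intermediate order does not destroy the off-diagonal identities — and it does not, because such derivatives involve only $y$-differentiations, so the $y^n$-coefficient $\frac{\partial^n u}{\partial y^n}(0,0)$, and only when $n$ is even, is the sole place where a harmless $O(|y|^n)$ discrepancy can appear.
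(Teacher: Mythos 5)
Your proof is correct and follows essentially the same route as the paper's: deduce $u_{yy}(0,0)\neq0$ from Proposition \ref{e3b}, differentiate the equation $n-2$ times and note that the nonlinear contribution vanishes except in the pure $y$-direction (because, by minimality of $n$, every nonzero derivative of order $<n$ carries no $x$-differentiation, so any Fa\`a di Bruno term for a mixed pattern contains a vanishing factor), obtain the alternating chains and hence \eqref{C13}, and absorb the pure $y$-terms into the $o(1)y^2$ coefficient and into $R$. The extra parity induction showing that the odd-order pure $y$-derivatives below order $n$ vanish, and that the $y^n$ discrepancy is absent for $n$ odd, is correct but unnecessary, since those terms are harmlessly absorbed in any case.
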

\begin{proof}
By \eqref{c14a}  and Proposition \ref{e3b} we get that $u_{yy}(0,0)\ne0$.\\
Next we prove \eqref{e18}. Since $u_x\not\equiv0$, as in Proposition \eqref{e3b}, let us consider the integer $n\ge3$ such that
 \eqref{c2a} holds. Then we have that 
\begin{equation}\label{C14}
\frac{\partial^mu}{\partial x^{m-k}\partial y^k}(0,0)=0\quad\hbox{for any $m<n$ and }k=0,..,m-1.
\end{equation}
Note that, unlike what happens in \eqref{C4a} of Proposition \ref{e3b},  since $u_{yy}(0,0)\ne0$ 
\eqref{C14}  does not hold for $k=m$. 
Next let us differentiate  \eqref{P} $n-2$ times with respect to $x$ and $y$. We get that
\begin{equation}\label{c15}
\frac{\partial^nu}{\partial x^{n-k}\partial y^k}(0,0)=-\frac{\partial^nu}{\partial x^{n+2-k}\partial y^{k-2}}(0,0)+\frac{\partial^{n-2}\big(f(u)\big)}{\partial x^{n-k}\partial y^{k-2}}(0,0)
\qquad\hbox{ for any } k=2,..,n.
\end{equation}
Observe that in $\frac{\partial^{n-2}\big(f(u)\big)}{\partial x^{n-k}\partial y^{k-2}}(0,0)$ appear the derivatives of $u$ with order $1,..,n-2$. All these are zero by \eqref{C14} except when $k=n$. So
we have 
\begin{equation}\label{c16}
\frac{\partial^{n-2}\big(f(u)\big)}{\partial x^{n-k}\partial y^{k-2}}(0,0)=0
\qquad\hbox{ for any } k=2,..,n-1.
\end{equation}
As in the proof of  Proposition \ref{e3b} we get from \eqref{c15} and  \eqref{c16},
\begin{equation}\label{c17}
\begin{cases}
\frac{\partial^nu}{\partial x^n}(0,0)=-\frac{\partial^nu}{\partial x^{n-2}y^2}(0,0)=\dots\Rightarrow
\frac{\partial^nu}{\partial x^{n-2k}\partial y^{2k}}(0,0)=(-1)^k\frac{\partial^nu}{\partial x^n}(0,0),\quad k=1,..,\left[\frac{n-1}2\right],
\\ \\
\frac{\partial^nu}{\partial x^{n-1}\partial y}(0,0)=-\frac{\partial^nu}{\partial x^{n-3}\partial y^3}(0,0)=\dots\Rightarrow
\frac{\partial^nu}{\partial x^{n-2k-1}\partial y^{2k+1}}(0,0)=(-1)^k\frac{\partial^nu}{\partial x^{n-1}\partial y}(0,0),\ k=1,..,\left[\frac{n-2}2\right],
\end{cases}
\end{equation}
By the previous equality and the definition of $n$ we get \eqref{C13}.

Now we use the previous identities  in Taylor's formula. We get
\begin{equation}\label{c18}
\begin{split}
&u(x,y)=u(0,0)+\frac{u_{yy}(0,0)}2y^2+\sum\limits_{m=3}^n\frac{\partial^mu}{\partial y^m}(0,0)y^m+
\frac1{n!}\sum\limits_{k=0}^{n-1}\left(\begin{matrix}n\\k\end{matrix}\right)
\frac{\partial^nu}{\partial x^{n-k}\partial y^k}(0,0)x^{n-k}y^k+R(x,y)=\\
&u(0,0)+\frac{u_{yy}(0,0)+o(1)}2y^2+
\frac1{n!}\sum\limits_{k=0}^{\left[\frac n2\right]}\left(\begin{matrix}n\\2k\end{matrix}\right)
\frac{\partial^nu}{\partial x^{n-2k}\partial y^{2k}}(0,0)x^{n-2k}y^{2k}+\\
&\frac1{n!}\sum\limits_{k=0}^{\left[\frac{n-1}2\right]}\left(\begin{matrix}n\\2k+1\end{matrix}\right)
\frac{\partial^nu}{\partial x^{n-2k-1}\partial y^{2k+1}}(0,0)x^{n-2k-1}y^{2k+1}+R(x,y)=\\
&(\hbox{using \eqref{c17}})=\\
&=u(0,0)+\frac{u_{yy}(0,0)+o(1)}2y^2+
\frac{\frac{\partial^nu}{\partial x^n}(0,0)}{n!}\sum\limits_{k=0}^{\left[\frac n2\right]}\left(\begin{matrix}n\\2k\end{matrix}\right)(-1)^ky^{n-2k}x^{2k}+\\
&\frac{\frac{\partial^nu}{\partial x^{n-1}\partial y}(0,0)}{n!}\sum\limits_{k=0}^{\left[\frac{n-1}2\right]}\left(\begin{matrix}n\\2k+1\end{matrix}\right)
(-1)^kx^{n-2k-1}y^{2k+1}+R(x,y)=\\
&=u(0,0)+\frac{u_{yy}(0,0)+o(1)}2y^2+\frac{\frac{\partial^nu}{\partial x^n}(0,0)}{n!}Re\left(z^n\right)+
\frac{\frac{\partial^nu}{\partial x^{n-1}\partial y}(0,0)}{n!}Im\left(z^n\right)+R(x,y).
\end{split}
\end{equation}
with $R(x,y)=O\left(|x|^n+|y|^n\right)$. This ends the proof.\\
\end{proof}
Now we are in position to give the proof of our theorems.
\begin{proof}[\bf Proof of Theorem \ref{c2}]
Since $(0,0)$ is a maximum point by Theorem \ref{b3} we get that   $i[\nabla u,(0,0)]=1$. Hence
Propositions \ref{e3b} implies \eqref{c1a}. Next to verify \eqref{c1b} and \eqref{c1c}  we consider the cases $n$ even and $n$ odd separately.
\vskip0.3cm\noindent
{\bf The case $n$ even}\\
From \eqref{e18} we have to show that $\frac{\partial^nu}{\partial x^n}(0,0)<0$. If
\begin{equation}
\frac{\partial^nu}{\partial x^n}(0,0)\ne0
\end{equation}
taking $y=0$ in \eqref{e18} and using that $(0,0)$ is a maximum point we get
\begin{equation}
u(0,0)\ge u(x,0)=u(0,0)+\frac{\frac{\partial^nu}{\partial x^n}(0,0)}{n!}x^n+O(|x|^{n+1}),
\end{equation}
and because $n$ is even the claim follows by choosing $|x|$ small enough. Hence suppose that $\frac{\partial^nu}{\partial x^n}(0,0)=0$. By \eqref{C13} we have that $\frac{\partial^nu}{\partial x^{n-1}\partial y}(0,0)\ne0$ and choosing $y=Bx^{n-1}$ in  \eqref{e18} we get
\begin{equation}
\begin{split}
&u(0,0)\ge u(x,Bx^{n-1})=u(0,0)+\left(B^2\frac{u_{yy}(0,0)+o(1)}2+B\frac{\frac{\partial^nu}{\partial x^{n-1}\partial y}(0,0)}{n!}\right)x^{2n-2}+o(|x|^{2n-2})>\\
&u(0,0)
\end{split}
\end{equation}
if  $B$ satisfies $B^2\frac{u_{yy}(0,0)+o(1)}2+B\frac{\frac{\partial^nu}{\partial x^{n-1}\partial y}(0,0)}{n!}>0$. This gives a contradiction.
\vskip0.3cm\noindent
{\bf The case $n$ odd}\\
First let us prove that ${\frac{\partial^nu}{\partial x^n}(0,0)}=0$. As in the previous case let us test \eqref{e18} for $y=0$. We get
\begin{equation}
u(0,0)\le u(x,y)=u(0,0)+\frac{\frac{\partial^nu}{\partial x^n}(0,0)}{n!}x^n+O(|x|^{n+1}),
\end{equation}
and since {\em $n$ is odd} we necessarily have that ${\frac{\partial^nu}{\partial x^n}(0,0)}=0$ and by \eqref{C13}  we deduce  that ${\frac{\partial^nu}{\partial x^{n-1}\partial y}(0,0)}\ne0$. \\
Now we prove \eqref{c1c} and \eqref{c1d}. We claim that
\begin{equation}\label{c21}
\hbox{there exists $l\le2n-2$ such that }\frac{\partial^lu}{\partial x^l}(0,0)\ne0.
\end{equation}
By contradiction let us suppose that 
\begin{equation}\label{e20}
\frac{\partial^{n+1}u}{\partial x^{n+1}}(0,0)=
\frac{\partial^{n+2}u}{\partial x^{n+2}}(0,0)=..=\frac{\partial^{2n-2}u}{\partial x^{2n-2}}(0,0)=0
\end{equation}
and complete  the expansion in \eqref{e18}  adding the terms up to the order $2n-2$. We get, 
\begin{equation}\label{c19}
\begin{split}
&u(x,y)=u(0,0)+\frac{u_{yy}(0,0)+o(1)}2y^2+
\frac{\frac{\partial^nu}{\partial x^{n-1}\partial y}(0,0)}{n!}Im\left(z^n\right)+\\
&\frac1{(n+1)!}\sum\limits_{k=1}^{n+1}\left(\begin{matrix}n+1\\k\end{matrix}\right)
\frac{\partial^{n+1}u}{\partial x^{n+1-k}\partial y^k}(0,0)x^{n+1-k}y^k+\dots+\\
&\frac1{(2n)!}\sum\limits_{k=1}^{2n}\left(\begin{matrix}2n\\k\end{matrix}\right)
\frac{\partial^{2n}u}{\partial x^{2n-k}\partial y^k}(0,0)x^{2n-k}y^k+R(x,y),
\end{split}
\end{equation}
with $R(x,y)=O\left(|x|^{2n+1}+|y|^{2n+1}\right)$. Setting $y=Ax^{n-1}$ in  \eqref{c19} ($A>0$ will be chosen later) and observing that $l-k+(n-1)k>2n-2$ for any $k\ge1$ and  $l\in[n+1,2n-2]$ we derive that
\begin{equation}
\begin{split}
&u(x,x^{n-1})=u(0,0)+\left(\frac{u_{yy}(0,0)}2A^2+
\frac{\frac{\partial^nu}{\partial x^{n-1}\partial y}(0,0)}{(n-1)!}A+o(1)\right)x^{2n-2}+o\left(x^{2n-2}\right)
\end{split}
\end{equation}
Choosing $\frac{u_{yy}(0,0)}2A^2+
\frac{\frac{\partial^nu}{\partial x^{n-1}\partial y}(0,0)}{(n-1)!}A>0$ and $|x|$ small enough we have that $u(x,x^{n-1})>u(0,0)$ and it contradicts that $(0,0)$ is a maximum point. Hence \eqref{c21} holds and choosing the minimum integer $l$ in \eqref{c21} we get that \eqref{c19} becomes
\begin{equation}
\begin{split}
&u(x,y)=u(0,0)+\frac{u_{yy}(0,0)+o(1)}2y^2+
\left(\frac{\frac{\partial^nu}{\partial x^{n-1}y}(0,0)}{n!}+o(1)\right)Im\left(z^n\right)+\\
&\left(\frac1{l!}+o(1)\right)\frac{\partial^lu}{\partial x^l}(0,0)x^l.
\end{split}
\end{equation}
which is equivalent to \eqref{c1c}. Since $(0,0)$ is a maximum point we get that $\frac{\partial^lu}{\partial x^l}(0,0)<0$ for $l\in[n+1,2n-2]$ and finally if $l=2n-2$, setting $y=Ax^{n-1}$ in  \eqref{c1c}, we get
\begin{equation}
\begin{split}
&u(x,y)=u(0,0)+\left(\frac{u_{yy}(0,0)}2A^2+
\frac{\frac{\partial^nu}{\partial x^{n-1}y}(0,0)}{(n-1)!}A+\frac{\frac{\partial^lu}{\partial x^l}(0,0)}{l!}+o(1)
\right)x^{2n-2}
\end{split}
\end{equation}
which implies
\begin{equation}
\frac{u_{yy}(0,0)}2A^2+
\frac{\frac{\partial^nu}{\partial x^{n-1}y}(0,0)}{(n-1)!}A+\frac{\frac{\partial^lu}{\partial x^l}(0,0)}{l!}\le0\quad\hbox{for any }A\in\R.
\end{equation}
Then \eqref{c1d}  holds and this ends the proof.
\end{proof}
\vskip0.2cm
\begin{proof}[\bf Proof of Theorem \ref{c3b}]
Since $i[\nabla f,(0,0)]\ge-1$ Proposition \ref{e3b} applies and so $u_{yy}(0,0)\ne0$. \\
In order to prove \eqref{c4c}-\eqref{e1c} we first remark that $u_x\not\equiv0$. Otherwise we get that $u=u(y)$ verifies $-u''=f(u)$ and since $u_{yy}\ne0$ we deduce that $(0,0)$ is a maximum or minimum point for $u$. So Proposition \eqref{c17b} applies and as in the proof of Theorem \ref{c2}  we use \eqref{e18} to handle the cases $n$ even and $n$ odd. However here the condition
\begin{equation}\label{C20}
i(\nabla u,(0,0))=-1
\end{equation}
plays a crucial role.\\
{\bf The case $n$ even}\\
From \eqref{e18}  we get that
\begin{equation}\label{c20}
\begin{cases}
&u_x=\frac{\frac{\partial^nu}{\partial x^n}(0,0)}{(n-1)!}Re\left(z^{n-1}\right)+\frac{\frac{\partial^nu}{\partial x^{n-1}\partial y}(0,0)}{(n-1)!}Im\left(z^{n-1}\right)+\frac{\partial R(x,y)}{\partial x}
\\
&u_y=(u_{yy}(0,0)+o(1))y-\frac{\frac{\partial^nu}{\partial x^n}(0,0)}{(n-1)!}Im\left(z^{n-1}\right)+
n\frac{\frac{\partial^nu}{\partial x^{n-1}\partial y}(0,0)}{(n-1)!}Re\left(z^{n-1}\right)+\frac{\partial R(x,y)}{\partial y},
\end{cases}
\end{equation}
Then  let us consider the vector field $f(x,y)=(f_1(x,y),f_2(x,y))$  given by
\begin{equation}\label{e21}
\begin{cases}
&f_1(x,y)=\frac{\frac{\partial^nu}{\partial x^n}(0,0)}{(n-1)!}Re\left(z^{n-1}\right)+\frac{\frac{\partial^nu}{\partial x^{n-1}\partial y}(0,0)}{(n-1)!}Im\left(z^{n-1}\right)+\frac{\partial R(x,y)}{\partial x}
\\
&f_2(x,y)=u_{yy}(0,0)y-\frac{\frac{\partial^nu}{\partial x^n}(0,0)}{(n-1)!}Im\left(z^{n-1}\right)+
\frac{\frac{\partial^nu}{\partial x^{n-1}\partial y}(0,0)}{(n-1)!}Re\left(z^{n-1}\right)+\frac{\partial R(x,y)}{\partial y},
\end{cases}
\end{equation}
If $\frac{\partial^nu}{\partial x^n}(0,0)\ne0$ then  $f$ satisfied the assumptions of  Lemma \ref{c12} with\\
 $m=n-1$,\\
 $L(x,y)=(0,u_{yy}(0)y)$,\\
$H_{n-1}(x,y)=\left(\frac{\frac{\partial^nu}{\partial x^n}(0,0)}{(n-1)!}Re\left(z^{n-1}\right)+\frac{\frac{\partial^nu}{\partial x^{n-1}\partial y}(0,0)}{(n-1)!}Im\left(z^{n-1}\right),-\frac{\frac{\partial^nu}{\partial x^n}(0,0)}{(n-1)!}Im\left(z^{n-1}\right)+\frac{\frac{\partial^nu}{\partial x^{n-1}\partial y}(0,0)}{(n-1)!}Re\left(z^{n-1}\right)\right)$,\\ 
$R(x,y)=\left(\frac{\partial R(x,y)}{\partial x},\frac{\partial R(x,y)}{\partial y}\right)$.\\
Moreover\\
$N(L)=(x,0)$ with $x\in\R$ and $R(L)=(0,y)$ with $y\in\R$,\\
$Q(x,y)=(x,0)=P(x,y)$,\\
$J=I$,\\
We claim that the main assumption in Lemma \ref{c12} , $H_{n-1}\Big|_{N(L)\setminus0}\not\in R(L)$  is satisfied. Indeed if $(x,y)\in N(L)\setminus0$ then $y=0$ and $x\ne0$. Hence
\begin{equation}
H_{n-1}(x,0)=\frac1{(n-1)!}\left(\frac{\partial^nu}{\partial x^n}(0,0)x^{n-1},\frac{\partial^nu}{\partial x^{n-1}y}(0,0)x^{n-1}\right)
\end{equation}
and since $\frac{\partial^nu}{\partial x^n}(0,0)\ne0$ this does not belong to $R(L)=(0,y)$. Then the claim of  Lemma \ref{c12} gets that
\begin{equation}
i[f,(0,0)]=i[L+JP,0]i\left[J^{-1}QH_{n-1}\Big|_{N(L)},0\right],
\end{equation}
and since in our case $(L+JP)(x,y)=L(x,y)+P(x,y)=(0,u_{yy}(0,0)y)+(x,0)=\big(x,u_{yy}(0,0)y\big)$ we get that 
\begin{equation}
i[L+JP,(0,0)]=\hbox{sgn}\big(u_{yy}(0,0)\big)
\end{equation}
On the other hand we have that $\big(J^{-1}QH_{n-1}\big)(x,0)=Q\frac1{(n-1)!}\left(\frac{\partial^nu}{\partial x^n}(0,0)x^{n-1},\frac{\partial^nu}{\partial x^{n-1}y}(0,0)x^{n-1}\right)=\left(\frac1{(n-1)!}\frac{\partial^nu}{\partial x^n}(0,0)x^{n-1},0\right)$ and since $n$ is even
\begin{equation}
i\left[J^{-1}QH_{n-1}\Big|_{N(L)},(0,0)\right]=\hbox{sgn}\left(\frac{\partial^nu}{\partial x^n}(0,0)\right)
\end{equation}
Hence we get that 
\begin{equation}
i[f,(0,0)]=\hbox{sgn}\big(u_{yy}(0,0)\big)\hbox{sgn}\left(\frac{\partial^nu}{\partial x^n}(0,0)\right).
\end{equation}
Finally by \eqref{c20} and \eqref{e21} we get that $\nabla u$ is a small perturbation of the vector field $f$ in a suitable small ball centered at $(0,0)$. Then  $i[f,(0,0)]=i[\nabla u,(0,0)]$ and since by assumption $i[\nabla u,(0,0)]=-1$ then \eqref{c4a} follows.\\
{\bf The case $n$ odd}\\
Here we proceed as in the case where $n$ is even. Let us suppose that $\frac{\partial^nu}{\partial x^n}(0,0)\ne0$ and
using the same notation we get that $i[f,(0,0)]$ is well posed and again
\begin{equation}
i[f,(0,0)]=i[L+JP,0]i\left[J^{-1}QH_{n-1}\Big|_{N(L)},0\right].
\end{equation}
However since in this case $n$ is odd we have that $\big(J^{-1}QH_{n-1}\big)(x,0)=0$ and so $i[f,(0,0)]=0$. As in the previous case this implies $i[\nabla u,(0,0)]=0$, a contradiction. Then we have that
\begin{equation}\label{c22}
\frac{\partial^nu}{\partial x^n}(0,0)=0.
\end{equation}
Hence \eqref{e18} and \eqref{c22} imply \eqref{e1c}.
\end{proof}
\sezione{Nondegeneracy of solutions  with one critical point}\label{s4}
In this section we consider a solution $u$ to \eqref{P+} with $f(0,0)\ge0$ and $\partial\Om$ with positive curvature. Moreover we assume that $(0,0)\in\Om$ is 
the {\bf unique} critical point of $u$ (of course its maximum).

First let us recall some results proved in \cite{CC}. For $\th\in[0,\pi)$ set
\begin{equation}
u_\th=\cos\th u_x+\sin\th u_y,
\end{equation}
and
\begin{equation}
\mathcal{N_\th}=\left\{(x,y)\in\Om\hbox{ such that }u_\th(x,y)=0\right\}.
\end{equation}
We have the following result,
\begin{lemma}\label{d3a}
We have that  $\mathcal{N_\th}\cap\partial\Om$ consists of exactly two points $P_1$ and $P_2$ and, around each
$P_i$, $\mathcal{N_\th}$ is a smooth curve that intersects $\partial\Om$ transversally at its end-point $P_i$, $i=1,2$.
\end{lemma}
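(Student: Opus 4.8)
The plan is to reduce the statement to the behaviour of $\na u$ on $\partial\Om$ together with the Gauss map of a strictly convex curve. By elliptic regularity ($f$ and $\partial\Om$ being smooth) one has $u\in C^\infty(\overline\Om)$, so $u_x,u_y$, and hence $u_\th$, extend smoothly to a neighbourhood of $\overline\Om$. Writing $-\Delta u=f(u)=f(0)+\big(\int_0^1 f'(tu)\,dt\big)u$ and using $f(0)\ge0$ and $u\ge0$, we get $-\Delta u+Cu\ge0$ near $\partial\Om$ for $C$ large; since $u\equiv0$ on $\partial\Om$ and $u>0$ in $\Om$, the Hopf lemma gives $\partial u/\partial\nu<0$ on $\partial\Om$, where $\nu$ is the outward unit normal. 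As $u$ vanishes identically on $\partial\Om$, its tangential gradient vanishes there, so on $\partial\Om$ we have $\na u=-|\na u|\,\nu$ with $|\na u|>0$. Consequently, for $P\in\partial\Om$ and $e_\th:=(\cos\th,\sin\th)$,
\[
u_\th(P)=\na u(P)\cdot e_\th=-|\na u(P)|\,\bigl(\nu(P)\cdot e_\th\bigr),
\]
so $P\in\mathcal{N}_\th\cap\partial\Om$ if and only if $\nu(P)\perp e_\th$, i.e. $\nu(P)=\pm(-\sin\th,\cos\th)$.

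Next I would exploit the strict positivity of the curvature of $\partial\Om$. Parametrizing $\partial\Om$ by arc length with unit tangent $\tau$, one has $d\nu/ds=\kappa\,\tau$ with $\kappa>0$, so the Gauss map $P\mapsto\nu(P)$ is a local diffeomorphism of the simple closed curve $\partial\Om$ onto $S^1$; since its degree equals the turning number of $\partial\Om$, which is one, it is in fact a global diffeomorphism. Hence each of the two distinct, antipodal unit vectors $\pm(-\sin\th,\cos\th)$ equals $\nu(P)$ for exactly one point of $\partial\Om$; denoting these points by $P_1,P_2$ we obtain $\mathcal{N}_\th\cap\partial\Om=\{P_1,P_2\}$ with $P_1\ne P_2$.

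It remains to analyse $\mathcal{N}_\th$ near each $P_i$. Differentiating the restriction $s\mapsto u_\th|_{\partial\Om}(s)=-|\na u|\,(\nu\cdot e_\th)$ at $P_i$, and using $\nu(P_i)\cdot e_\th=0$ together with $d\nu/ds=\kappa\,\tau$, the first term drops out and
\[
\tfrac{d}{ds}\bigl(u_\th|_{\partial\Om}\bigr)(P_i)=-|\na u(P_i)|\,\kappa(P_i)\,\bigl(\tau(P_i)\cdot e_\th\bigr)=\pm|\na u(P_i)|\,\kappa(P_i)\ne0,
\]
since at $P_i$ the tangent $\tau(P_i)$ is parallel to $e_\th$ (both being orthogonal to $\nu(P_i)$). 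In particular $\na u_\th(P_i)\ne(0,0)$, with nonvanishing tangential component, so by the implicit function theorem $\mathcal{N}_\th=\{u_\th=0\}$ is, in a neighbourhood of $P_i$, a smooth curve whose tangent line is orthogonal to $\na u_\th(P_i)$ and therefore not parallel to $\nu(P_i)$; this is precisely transversality of $\mathcal{N}_\th$ and $\partial\Om$ at $P_i$, which forces the portion of the curve lying in $\overline\Om$ to be an arc with end-point $P_i$.

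I expect the only delicate point to be the bookkeeping: ensuring $u_\th$ is $C^\infty$ up to and across $\partial\Om$ so that the implicit function theorem and the arc-length differentiation are legitimate, and checking that strict positivity of the curvature (rather than mere convexity) is what makes the Gauss map a global diffeomorphism and thus yields \emph{exactly} two intersection points instead of a whole arc of them.
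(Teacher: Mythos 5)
Your proposal is correct and follows essentially the same route as the paper: Hopf's lemma gives $\partial u/\partial\nu<0$ so that $\mathcal{N}_\th\cap\partial\Om=\{P\in\partial\Om:\nu(P)\perp e_\th\}$, strict positivity of the curvature yields exactly two such points, and transversality follows from the nonvanishing of the tangential derivative of $u_\th$ at $P_i$ together with the implicit function theorem. The only cosmetic difference is how that nonvanishing is obtained: you differentiate $-|\na u|\,(\nu\cdot e_\th)$ along arc length via the Frenet formula, while the paper substitutes $\cos\th=u_y/\partial_\nu u$, $\sin\th=-u_x/\partial_\nu u$ into $\langle\na u_\th,e_\th\rangle=\cos^2\th\,u_{xx}+2\cos\th\sin\th\,u_{xy}+\sin^2\th\,u_{yy}$ and recognizes the curvature numerator of the level set $\{u=0\}$ --- the same computation in different notation.
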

\begin{proof}
Let $P\in\mathcal{N_\th}\cap\partial\Om$ and $\nu=(\nu_x,\nu_y)$ the exterior unit normal to $\Om$. Then, since $u=0$ on $\partial\Om$ we have that $u_\th(P)=\frac{\partial u}{\partial \nu}(P)(cos\th\nu_x(P)+\sin\th\nu_y(P))$. Since $f(0)\ge0$ 
by Hopf Lemma we get that $\frac{\partial u}{\partial \nu}(P)<0$ and so $u_\th(P)=0$ if and only if the vector $(cos\th,\sin\th)$ is orthogonal to the normal $\nu$ and since the curvature of $\partial\Om$ is positive  it happens exactly at two points $P_1$ and $P_2$.\\
Next let us show that around  each $P_i$, $i=1,2$, $\mathcal{N_\th}$ is a smooth curve. We have that 
\begin{equation}\label{a2}
<\nabla u_\th,(cos\th,\sin\th)>=\cos^2\th u_{xx}+2\cos\th\sin\th u_{xy}+\sin^2\th u_{yy}
\end{equation}
and using that $u=0$ on $\partial\Om$ and $(cos\th,\sin\th)$ is orthogonal to $\nu$ we get  that
\begin{equation}
\cos\th=\frac{u_y(P)}{\frac{\partial u}{\partial \nu}(P)}\quad\hbox{and }\sin\th=-\frac{u_x(P)}{\frac{\partial u}{\partial \nu}(P)}.
\end{equation}
Hence \eqref{a2} becomes
\begin{equation}
<\nabla u_\th,(cos\th,\sin\th)>=u_y^2(P)u_{xx}+2u_xu_y(P) u_{xy}+u_x^2(P)u_{yy}\ne0
\end{equation}
because the curvature of $\partial\Om$ is positive. Then the implicit function theorem applies and the claim follows.
\end{proof}
\begin{remark}\label{d7}
The condition $f(0)\ge0$ is used to apply the Hopf Lemma on $\partial\Om$ and deduce that $\frac{\partial u}{\partial\nu}<0$. What we need in the proof of Lemma \ref{d3a} is that $\nabla u\ne(0,0)$ on $\partial\Om$.
\end{remark}
We say that $P\in\Om$ is a singular point for $u_\th$ if
\begin{equation}
\nabla u_\th(P)=(0,0).
\end{equation}
\begin{lemma}\label{L}
If $P\in\mathcal{N_\th}$ is a singular point for $u_\th$ then the curve  $\mathcal{N_\th}$ ``encloses'' a subdomain $\omega\subset\Om$. More precisely 
there exists a subdomain $\omega\subset\Om$ such that
\begin{itemize}
\item $\partial\omega$ is a Jordan curve.
\item $\partial\omega\cap\partial\Omega=\emptyset$
\item $\partial\omega\subset\mathcal{N_\th}$.
\end{itemize}
\end{lemma}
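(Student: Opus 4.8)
The plan is to read the zero set $\mathcal{N}_\theta$ of $u_\theta$ as a finite planar graph and to run a leaf/Euler-characteristic count: a singular point forces a branch point of $\mathcal{N}_\theta$ of order at least $4$, whereas by Lemma \ref{d3a} the set $\mathcal{N}_\theta$ reaches $\partial\Omega$ at only two points; these two facts are incompatible with $\mathcal{N}_\theta$ being a forest, so it must contain a cycle, and that cycle will be the Jordan curve $\partial\omega$. First I would record the elementary facts about $u_\theta$: differentiating $-\Delta u=f(u)$ in $x$ and $y$ and taking a linear combination shows that $u_\theta$ solves the linear elliptic equation $-\Delta u_\theta=f'(u)\,u_\theta$ in $\Omega$ with $f'(u)\in C^\infty$, so $u\in C^\infty(\Omega)$ and $u_\theta\in C^\infty(\Omega)$ by interior regularity; moreover $u_\theta\not\equiv 0$, since $u_\theta\equiv0$ would make $u$ constant on every chord of $\Omega$ parallel to $(\cos\theta,\sin\theta)$, hence $\equiv 0$ because $u=0$ at the two endpoints of such a chord, contradicting $u>0$. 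Consequently $u_\theta$ vanishes to finite order at every point of $\mathcal{N}_\theta$.

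Next I would recall the local structure of $\mathcal{N}_\theta$. Where $\nabla u_\theta\ne(0,0)$ it is a smooth embedded arc by the implicit function theorem. At a singular point $P\in\mathcal{N}_\theta$, letting $m\ge2$ be the vanishing order of $u_\theta$ at $P$, the leading homogeneous part of its Taylor expansion at $P$ is harmonic (because $\Delta u_\theta=-f'(u)u_\theta$ vanishes there to order $>m-2$), hence of the form $\mathrm{Re}\big(c(z-z_P)^m\big)$ with $c\ne0$, whose zero set is $2m$ equally spaced rays through $P$; by the classical description of nodal sets of planar second-order elliptic equations (see \cite{CF} and \cite{Ar}), $\mathcal{N}_\theta$ near $P$ consists of exactly $2m\ge4$ embedded arcs issuing from $P$. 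In particular the singular points of $u_\theta$ are isolated, and they cannot accumulate at an interior point (the limit would again be singular) nor on $\partial\Omega$ (where, by Lemma \ref{d3a}, $\mathcal{N}_\theta$ is the smooth arc ending at $P_1$, resp.\ $P_2$), so there are finitely many of them; also $\overline{\mathcal{N}_\theta}\cap\partial\Omega=\{P_1,P_2\}$, since any boundary limit point of $\mathcal{N}_\theta$ is a zero of $u_\theta$ on $\partial\Omega$ and the computation in Lemma \ref{d3a} identifies these with $P_1,P_2$. Thus $G:=\overline{\mathcal{N}_\theta}$ is a finite $1$-complex embedded in $\overline\Omega$: its vertices are the singular points (each of even degree $\ge4$) together with $P_1,P_2$ (each of degree $1$), its edges are the closed arcs between consecutive vertices, and $G\cap\partial\Omega=\{P_1,P_2\}$.

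Now the topological step. Suppose, for contradiction, that $\mathcal{N}_\theta$ contains no Jordan curve; then $G$ is a forest, since a cycle of $G$ cannot pass through the degree-$1$ vertices $P_1,P_2$ and would therefore lie in $\mathcal{N}_\theta$. Let $T$ be the connected component of $G$ containing the given singular point $P$; it is a finite tree, so $\sum_{v\in T}(\deg_T(v)-2)=2|E(T)|-2|V(T)|=-2$. Isolating the contribution of the degree-$1$ vertices (leaves) of $T$ and using that every other vertex has degree $\ge2$ while $\deg_T(P)=\deg_G(P)\ge4$, we find that $T$ has at least $2+(\deg_G(P)-2)\ge4$ leaves; but each leaf of $T$ is a degree-$1$ vertex of $G$, hence equals $P_1$ or $P_2$, so $T$ has at most two leaves — a contradiction. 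Hence $\mathcal{N}_\theta$ contains a Jordan curve $\gamma$. Since $\gamma\subset\Omega$ and $\Omega$ is convex ($\partial\Omega$ has positive curvature), the bounded component $\omega$ of $\R^2\setminus\gamma$ is contained in the convex hull of $\gamma$, hence in $\Omega$; and $\partial\omega=\gamma\subset\mathcal{N}_\theta$ with $\partial\omega\cap\partial\Omega=\emptyset$. This is exactly the asserted subdomain.

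The step I expect to be the real obstacle is the structural input in the second paragraph, namely making precise the $2m$-arc branching of $\mathcal{N}_\theta$ at a singular point together with the finiteness of the singular set; this leans on the classical nodal-set theory for two-dimensional elliptic equations (Bers, Hartman--Wintner, Cheng), and once it is granted the remainder is the elementary graph count above combined with the two-boundary-point information supplied by Lemma \ref{d3a}.
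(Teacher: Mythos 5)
Your argument is correct, and there is nothing in the paper to compare it against line by line: the paper does not prove this lemma at all, it simply writes ``See \cite{CC}''. What you have written is in substance the Cabr\'e--Chanillo argument, recast in a clean graph-theoretic form: the local structure of nodal sets of $-\Delta v=f'(u)v$ in the plane gives $2m\ge4$ arcs at a singular point, Lemma \ref{d3a} gives exactly two transversal boundary exits $P_1,P_2$ of degree one, and your leaf count $|L|=2+\sum_{v\notin L}(\deg v-2)\ge 4>2$ for the tree containing $P$ forces a cycle, hence a Jordan curve in $\mathcal{N}_\theta$; convexity of $\Omega$ then puts the enclosed region inside $\Omega$. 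The one place where you are (rightly) leaning on cited theory rather than proving things is the passage from ``finitely many singular points, each with $2m$ local branches'' to ``$\overline{\mathcal{N}_\theta}$ is a finite $1$-complex whose maximal regular arcs terminate at vertices'': ruling out infinitely many regular components and arcs without well-defined endpoints requires the compactness argument (cover $\overline\Omega$ by the neighborhoods of $P_1,P_2$, small balls around the singular points, a collar of $\partial\Omega\setminus\{P_1,P_2\}$ disjoint from $\mathcal{N}_\theta$, and finitely many charts where $\mathcal{N}_\theta$ is a single graph over a line); this is standard for two-dimensional nodal sets (Hartman--Wintner, \cite{CF}) but is the step a referee would ask you to spell out. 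With that granted, your proof establishes exactly the three bullet points of the statement.
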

\begin{proof}
See \cite{CC}.
\end{proof}
Now we are in position to prove Theorem \ref{T1}.
\begin{proof}[\bf Proof of Theorem \ref{T1}]
By contradiction let us suppose that $(0,0)$ is degenerate. Up to a rotation we can assume \eqref{c1} and then
\begin{equation}
\nabla u_x(0,0)=0.
\end{equation}
By Lemma \ref{L} the curve $\mathcal{N}_0$ encloses a region $\omega\subset\subset\Om$.
Moreover  $u_x$ verifies
\begin{equation}\label{4}
\begin{cases}
-\Delta u_x=f'(u)u_x & \hbox{ in }\omega\\
u_x=0 & \hbox{ on }\de\omega
\end{cases}
\end{equation}
and we can suppose  that $u_x>0$  in $\omega$.
Next let us take $Q\in\omega$ and  set 
\begin{equation}
w(x,y)=u_y(Q)u_x(x,y)-u_x(Q)u_y(x,y)
\end{equation}
and
$$\mathcal{M}=\left\{(x,y)\in\Om\hbox{ such that }w(x,y)=0\right\}.$$
Of course $Q\in\mathcal{M}$ and since $\nabla u(Q)\ne(0,0)$ (uniqueness of the critical point) the definition of $w$ is well posed. Moreover $w$ satisfies
\begin{equation}
-\Delta w=f'(u)w \quad \hbox{ in }\Om
\end{equation}
and by \cite{CF} we have that around $Q$ the set $\mathcal{M}$ consists of a finite number of  curves intersecting transversally at $Q$ (this is actually a smooth curve is $\nabla w(Q)\ne0$).\\
We have the following alternative,\\
{\bf Case $1$: $\mathcal{M\cap\de\omega}$ is given by at most one point}\\
In this case we have that $\mathcal{M}$ encloses at least one region in $\omega$ and then there exists $D\subset\omega$ such that 
\begin{equation}\label{d4}
\begin{cases}
-\Delta w=f'(u)w & \hbox{ in }D\\
w=0 & \hbox{ on }\de D
\end{cases}
\end{equation}
This implies that the first eigenvalue $\lambda_1(-\Delta-f'(u)I)<0$ in $\omega$ and this gives a contradiction with \eqref{4}.\\
{\bf Case $2$: $\mathcal{M\cap\de\omega}$ is given by at least two  points}\\
In this case we would have that the derivatives $u_\th$ and $w$ intersect at two different points and this gives a contradiction with the uniqueness of the critical point unless $w=Cu_x$.
This implies that $u_\th(Q)=Cw(Q)=0$ and so $Q\in\partial\omega$. This is a contradiction which ends the proof.
\end{proof}
\begin{proof}[\bf Proof of Corollary \ref{C1}]
Let us suppose that $(0,0)$ is a degenerate critical point of $u$ and consider a level set ${\mathcal C}=\{u=c\}$ with $0<c<||u||_\infty$. Since $(0,0)$ is the unique critical point we have that ${\mathcal C}$ is a smooth closed curve. By contradiction suppose that the curvature of  ${\mathcal C}$ is
positive everywhere. We have that $u$ solves to the following problem,
\begin{equation}
\begin{cases}
-\Delta u=f(u) & \hbox{ in }\{u>c\}\\
u=c & \hbox{ on }{\mathcal C}.
\end{cases}
\end{equation}
The same proof  Theorem \ref{T1} applies with $\partial\Omega$ replaced by ${\mathcal C}$. The only difference is that here the condition $f(0)\ge0$ is not
sufficient to apply  the Hopf Lemma to ${\mathcal C}$ in Lemma \ref{d3a} because $c\ne0$. However if  $\frac{\partial u}{\partial \nu}(P)=0$ for some $P\in{\mathcal C}$ then $\nabla u(P)=0$ and this contradicts the uniqueness of the critical point. The claim of Theorem \ref{T1} gives that $(0,0)$ is nondegenerate,
a contradiction. This means that $any$ level set $\{u=c\}$ must contain a point with nonpositive curvature. This ends the proof.
\end{proof}
\begin{proof}[\bf Proof of Corollary \ref{C2}]
It is the same of Corollary \ref{C1}.
\end{proof}
We end this section with two interesting examples of solution to \eqref{P+}, both suggested by
 Theorems \ref{c2} and \ref{c3b}.  The first one shows that \eqref{d5} can occur.
\begin{example}\label{e5}
If we put $n=4$ in \eqref{c1b} we are leading to consider the set
\begin{equation}
D_\al=\left\{(x,y)\in\R^2\hbox{ such that }
\frac12y^2+x^4-6x^2y^2+y^4<\al\right\}.
\end{equation}
It  is not difficult to show that there exists $\al_0>0$ such that  for any $0<\al<\al_0$ the set $\partial D_\al$ is a closed curve. Note that $\partial D_\al$ shrinks to the origin as $\al\to0$.
Fix $\bar\al\in(0,\al_0)$ and let us consider the function
\begin{equation}
u(x,y)=\bar\al-
\left(\frac12y^2+x^4-6x^2y^2+y^4\right).
\end{equation}
We have that $u$ verifies
\begin{equation}\label{C30}
\begin{cases}
-\Delta u=1 & \hbox{ in }D_{\bar\al}\\
u>0& \hbox{ in }D_{\bar\al}\\
u=0& \hbox{ on }\partial D_{\bar\al}
\end{cases}
\end{equation}
and $(0,0)$ is the unique critical (maximum) point to $u$ in $D_{\bar\al}$ provided to choose $\bar\al$ suitably smaller. Moreover $u$ is a semi-stable solution to \eqref{C30}.
A straightforward computation shows that the curvature at a point $(0,y)\in\{u=c\}$ with $0<c<\bar\al$  is given by
\begin{equation}
\begin{split}
&k(0,y)=-\frac{u_{xx}u_y^2-2u_{xy}u_xu_y+u_{yy}u_x^2}{(u_x^2+u_y^2)^\frac32}=-\frac{12|y|}{1+4y^3}<0
\end{split}
\end{equation}
for $y<0$. This means that there is always at least a point in $\{u=c\}$ with negative curvature which gives the claim.
\end{example}

Next example shows that semi-stable solutions to \eqref{P+} can have two critical points in star-shaped domains. 
\begin{example}\label{d6}
Theorem \ref{c3b} with $n=3$ suggested the construction of this example.\\
For $c$ small enough let us consider
\begin{equation}
\begin{split}
&D_c=\left\{(x,y)\in\R^2\hbox{ such that }\frac12y^2+x^3-3xy^2+A(x^4-6x^2y^2+y^4)<c\right\}
\end{split}
\end{equation}
Some tedious computations show that for $A>\frac{35}2$ and $c$ small enough $D_c$ is a closed curve.\\
For the same parameters $A$ and $c$ let us consider the function
\begin{equation}
u(x,y)=c-
\left(\frac12y^2+x^3-3xy^2+A(x^4-6x^2y^2+y^4)\right).
\end{equation}
We have that $u$ verifies
\begin{equation}
\begin{cases}
-\Delta u=1 & \hbox{ in }D_c\\
u>0& \hbox{ in }D_c\\
u=0& \hbox{ on }\partial D_c
\end{cases}
\end{equation}
A straightforward computation shows that $u$ admits exactly two critical points. $P_0=(0,0)$ (saddle point) and $P_1=\left(-\frac3{4A},0\right)$ (maximum point). As remarked in the introduction, $D_c$ is a star-shaped domain with respect to the origin. Since $u$ is a semi-stable solution to \eqref{P+}, this example shows that in Cabr\'e-Chanillo's Theorem the assumption of positive curvature cannot be relaxed to star-shapedness with respect to some point.
\end{example}
\sezione{Non-isolated maximum points}\label{s5}
In  this section we show some consequences  of Theorem \ref{c2} when the maximum (minimum) points of $u$ is not isolated. 
\begin{proof}[\bf Proof of Proposition \ref{P1}]
Since $(0,0)$ is not isolated we have that the sets $S_\e=\left\{x^2+y^2=\e\right\}$ and $\mathcal{C}=\{(x,y)\in B_1\hbox{ such that }u(x,y)=u(0,0)\}$ verify
 \begin{equation}\label{d13}
 S_\e\cap\mathcal{C}\ne\emptyset
\end{equation}
for $\e>0$ small enough. Let us consider $(x_\e,y_e)\in S_\e\cap\mathcal{C}$.  If by contradiction $n$ is even in Theorem \ref{c2}, using that  for $(x,y)\in S_\e$,
 \begin{equation}
Re(z^n)=x^n+o(y^2)\quad\hbox{and}\quad Im(z^n)=nx^{n-1}y+o(y^2),
\end{equation}
we get by \eqref{c1b} and  \eqref{d13}
 \begin{equation}
  \begin{split}
&0=\left(\frac{u_{yy}(0,0)}2+o(1)\right)y_\e^2+\left(\frac{\frac{\partial^nu}{\partial x^n}(0,0)}{n!}+o(1)\right)\left(\e^2-y_\e^2\right)^\frac n2+\left(\frac{\frac{\partial^nu}{\partial x^{n-1}\partial y}(0,0)}{(n-1)!}+o(1)\right)x_\e^{n-1}y_\e=\\
&\big(\hbox{using the Young inequality}\big)=\\
&\left(\frac{u_{yy}(0,0)}2+o(1)\right)y_\e^2+\left(\frac{\frac{\partial^nu}{\partial x^n}(0,0)}{n!}+o(1)\right)\e^n,
\end{split}
\end{equation}
which implies $y_\e=\e=0$, a contradiction with \eqref{d13}. 

So $n$ is odd and repeating the previous computation for   \eqref{c1c}, we obtain
\begin{equation}\label{d14}
  \begin{split}
&0=\left(\frac{u_{yy}(0,0)}2+o(1)\right)y_\e^2+\left(
\frac{\frac{\partial^nu}{\partial x^{n-1}y}(0,0)}{(n-1)!}+o(1)\right)y_\e\left(\e^2-y_\e^2\right)^\frac{n-1}2+\left(\frac{\frac{\partial^lu}{\partial x^l}(0,0)}{l!}+o(1)\right)\left(\e^2-y_\e^2\right)^\frac l2=\\
&\left(\frac{u_{yy}(0,0)}2+o(1)\right)y_\e^2+\left(
\frac{\frac{\partial^nu}{\partial x^{n-1}y}(0,0)}{(n-1)!}+o(1)\right)\e^{n-1}y_\e+\left(\frac{\frac{\partial^lu}{\partial x^l}(0,0)}{l!}+o(1)\right)\e^l.
\end{split}
\end{equation}
From \eqref{d14} we deduce that
\begin{equation}\label{d15}
\frac2{l!}u_{yy}(0,0){\frac{\partial^lu}{\partial x^l}(0,0)}\e^l-\frac1{(n-1)!^2}\left(
\frac{\partial^nu}{\partial x^{n-1}y}(0,0)\right)^2\e^{2n-2}\le0
\end{equation}
and if $l<2n-2$ we get $u_{yy}(0,0){\frac{\partial^lu}{\partial x^l}(0,0)}\le0$, a contradiction with Theorem \ref{c2}. So  $l=2n-2$ in \eqref{d15} and using \eqref{c1d}  we deduce that
\begin{equation}
\frac2{l!}u_{yy}(0,0){\frac{\partial^lu}{\partial x^l}(0,0)}=\frac1{(n-1)!^2}\left(
\frac{\partial^nu}{\partial x^{n-1}y}(0,0)\right)^2
\end{equation}
which ends the proof.
\end{proof}
\begin{proof}[\bf Proof of Corollary \ref{C3}]
Assume that $u=u(r)$ is a radial function  with $r^2=x^2+(y-P)^2$, $P\ne0$ and $u$ solves 
\begin{equation}
-\Delta u=f(u)
\end{equation}
Suppose that $u'(P)=0$ and $u''(P)<0$ and so $(0,0)$ belongs to a circle of maximum points.
The derivatives at $(0,0)$ are given by 
\begin{equation}\label{d16}
\begin{cases}
u_{yy}(0,0)=u''(P)=-f(u(P))\\
u_{xxx}(0,0)=0\\ 
u_{xxy}(0,0)=-\frac1Pu''(P)=\frac1Pf(u(P))\\ 
u_{xxxx}(0,0)=\frac3{P^2}u''(P)=-\frac3{P^2}f(u(P))
\end{cases}
\end{equation}
So we have that by \eqref{c1c} we get \eqref{e6} and by \eqref{d16} we have
\begin{equation}
u_{xxy}^2(0,0)=\frac{f^2(u(P))}{P^2}=
\frac{u_{yy}(0,0)u_{xxxx}(0,0)}3.
\end{equation}
So the equality is achieved in \eqref{c1d} (equivalently in  \eqref{e7}).
\end{proof}
\begin{remark}
We do know examples of solutions with a curve of maximum point for $l>3$ in Proposition \ref{P1}. It should be interesting to construct it (it there exist!).
\end{remark}
We end this section with some additional properties of solutions with curves of maximum points.
\begin{proposition}\label{D13}
 Assume the same assumptions  of Theorem \ref{c2} and suppose that $\gamma$ is a smooth curve of maxima for $u$. Then, denoting by $k(0,0)$ the curvature of $\gamma$ at $(0,0)$ the following alternative holds: either
\begin{equation}\label{d11}
k(0,0)\ne0
\end{equation}
or
\begin{equation}\label{d12}
u_x\hbox{ consists of $4$ curves intersecting transversally at  $(0,0)$}.
\end{equation}
\end{proposition}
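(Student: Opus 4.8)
The plan is to work with the local structure of the curve $\gamma$ of maxima near $(0,0)$ and the expansions from Theorem \ref{c2}, splitting according to whether $k(0,0)=0$ or not. Suppose $k(0,0)=0$; we must prove \eqref{d12}. Since $\gamma$ is a curve of maxima and $(0,0)\in\gamma$, Proposition \ref{P1} applies: $n$ is odd, $l=2n-2$, and equality holds in \eqref{c1d}. Moreover along $\gamma$ we have $u\equiv u(0,0)$ and $\nabla u\equiv 0$, so $\gamma\subset\mathcal{N}_0\cap\mathcal{N}_{\pi/2}$ (the zero sets of $u_x$ and $u_y$); in particular $\gamma$ is contained in the zero set of $u_x$. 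The strategy is to show that under $k(0,0)=0$ the integer $n$ must in fact equal $3$, and then to analyze the zero set of $u_x$ directly from the cubic expansion, identifying the four branches.

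First I would pin down the geometry of $\gamma$. After the rotation giving \eqref{c1}, we have $u_{yy}(0,0)<0$, so $u$ restricted to the $y$-axis has a strict local max at $0$; hence $\gamma$ is not the $y$-axis and near $(0,0)$ it is a graph $y=\varphi(x)$ with $\varphi(0)=0$. The condition $u\circ(x,\varphi(x))\equiv u(0,0)$ together with the expansion \eqref{c1c} forces, upon substituting $y=\varphi(x)$ and matching the lowest-order terms, a relation between $\varphi$ and the coefficients; since the $y^2$ coefficient $u_{yy}(0,0)/2$ is the dominant quadratic contribution, one reads off that $\varphi(x)=O(x^{(n-1)})$ at least, and more precisely $\varphi(x)\sim \kappa\, x^{\,n-1}$ for a constant $\kappa$ determined by the Young-inequality equality case in Proposition \ref{P1} (this is exactly what makes the equality hold). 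The curvature of $y=\varphi(x)$ at $0$ is $\varphi''(0)$, which is nonzero iff $n-1=2$, i.e. $n=3$. Thus $k(0,0)=0$ (equivalently $\varphi''(0)=0$) forces $n\ge 4$, hence, $n$ being odd, $n\ge 5$ — OR it forces $n=3$ but with $\varphi$ having a higher-order flatness, a possibility I would rule out by a more careful expansion of $u$ along $\gamma$ to order $2n$. Either way, I would argue that the flat case is incompatible with $\gamma$ being a genuine one-dimensional curve of maxima unless the expansion degenerates, which brings us to analyzing $u_x$.

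The second and main step is the structure of the zero set $\mathcal{N}_0=\{u_x=0\}$ near $(0,0)$. Differentiating \eqref{c1c} (in the odd case, the relevant one here) gives $u_x$ as a sum whose lowest-order homogeneous part is, up to the nonzero constant $\tfrac{1}{(n-1)!}\tfrac{\partial^n u}{\partial x^{n-1}\partial y}(0,0)$, the polynomial $\partial_x\big(\mathrm{Im}(z^n)\big)=n\,\mathrm{Im}(z^{n-1})$, a homogeneous harmonic polynomial of degree $n-1$ whose zero set is $n-1$ lines through the origin. When $n=3$ this is degree $2$, giving two lines, i.e. $u_x$ would be (generically) two curves through $(0,0)$ — not four. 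So to get exactly four transversal branches I expect one must be in the case where this leading part vanishes identically or where, because $\gamma\subset\{u_x=0\}$, the curve $\gamma$ together with a transverse factor accounts for the branches; concretely, the claim \eqref{d12} should say that the leading homogeneous part of $u_x$ at $(0,0)$ is a degree-$4$ polynomial (forced by $k(0,0)=0$ pushing $n$ up or by a secondary cancellation) that factors into four distinct real linear forms, whence by the Caffarelli–Friedman type result cited after \eqref{a2} (\cite{CF}), $\mathcal{N}_0$ is four curves meeting transversally at $(0,0)$. I would therefore: (a) use $k(0,0)=0$ and Proposition \ref{P1} to show the lowest-order nonvanishing homogeneous part $P_d$ of $u_x$ at the origin has degree $d=4$; (b) show $P_d$ has four distinct real roots (no repeated factors), using that one factor corresponds to the tangent of $\gamma$ and the harmonicity/structure of $u_x$ (it solves $-\Delta u_x=f'(u)u_x$) forces the homogeneous leading part to be harmonic, hence a real-linear-factor product with simple roots; (c) invoke \cite{CF} to conclude the zero set is four transversal curves.

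The hard part will be step (b)–(a): rigorously showing that the vanishing curvature $k(0,0)=0$ forces precisely degree four (not two, not six) for the leading part of $u_x$, and that this leading part has four \emph{distinct} real linear factors rather than a repeated one. This requires carefully tracking how the equality case in \eqref{c1d} (from Proposition \ref{P1}) interacts with the Taylor coefficients controlling $\varphi''(0)$, and using that the leading homogeneous part of any solution of $-\Delta v=f'(u)v$ vanishing at a point is harmonic (so automatically has only simple real linear factors) — the standard fact behind the structure theorem of \cite{CF}. Once degree $4$ and harmonicity are established, "four transversal curves" is immediate, and the dichotomy \eqref{d11}–\eqref{d12} follows since the only alternative to $k(0,0)\ne0$ is this degenerate configuration.
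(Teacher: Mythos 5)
Your overall strategy --- invoke Proposition \ref{P1} to get $n$ odd with $l=2n-2$ and equality in \eqref{c1d}, argue that vanishing curvature pushes $n$ up, and then read off the branches of $\{u_x=0\}$ from the leading homogeneous part of $u_x$ --- is the same as the paper's, but the decisive step is left open. You explicitly concede that you cannot yet exclude ``$n=3$ but with $\varphi$ having higher-order flatness,'' and the asymptotics $\varphi(x)\sim\kappa x^{n-1}$ on which your curvature argument rests is asserted, not derived. That is precisely the crux, and the paper closes it with a short direct computation that your write-up is missing: parametrize $\gamma(t)=(x(t),y(t))$, $\gamma(0)=(0,0)$, and differentiate the identity $u(\gamma(t))\equiv u(0,0)$. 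The second derivative at $t=0$ reduces (using \eqref{c1} and $\nabla u=0$ on $\gamma$) to $u_{yy}(0,0)\,\dot y(0)^2=0$, hence $\dot y(0)=0$; the hypothesis $k(0,0)=0$ then forces $\ddot y(0)=0$, and the fourth derivative at $t=0$ collapses to $u_{xxxx}(0,0)\,\dot x(0)^4=0$, i.e.\ $u_{xxxx}(0,0)=0$. If $n=3$, Theorem \ref{c2} gives $l=4$ and $\frac{\partial^4u}{\partial x^4}(0,0)<0$ (equivalently, equality in \eqref{c1d} would force $u_{xxy}(0,0)=0$, contradicting $\frac{\partial^nu}{\partial x^{n-1}\partial y}(0,0)\ne0$), a contradiction; since $n$ is odd, $n\ge5$.

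Your step (a) also aims at the wrong target. You set out to prove that the leading homogeneous part of $u_x$ has degree \emph{exactly} $4$ (``not two, not six'') and identify this as the hard part; that is neither true in general nor needed. Once $n\ge5$, differentiating \eqref{c1c} shows the leading part of $u_x$ is a nonzero combination $(A+o(1))Re\left(z^{n-1}\right)+(B+o(1))Im\left(z^{n-1}\right)$ of degree $n-1\ge4$, whose zero set consists of $n-1\ge4$ curves meeting transversally at the origin; \eqref{d12} is to be read as ``at least four branches,'' which is all that the subsequent corollary on the Morse index uses. Insisting on degree exactly four would fail for $n\ge7$. Your remaining ingredients (harmonicity of the leading part of a solution of $-\Delta v=f'(u)v$ and the Caffarelli--Friedman structure of its zero set) are correct and coincide with what the paper uses implicitly.
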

\begin{proof}
Let us suppose that $k(0,0)=0$ and show that \eqref{d12} holds. Let us parametrize $\gamma$ as follows,
\begin{equation}
\gamma(t)=
\begin{cases}
x=x(t)\\
y=y(t)
\end{cases}t\in(-\e,\e)
\end{equation}
with $\gamma(0)=(0,0)$. Then since $u(\gamma(t))=u(0,0)$ for any $t\in(-\e,\e)$ a straightforward  computation  gives
\begin{equation}\label{d9}
\begin{cases}
0=\frac{d^2}{dt^2}u(\gamma(t))\Big|_{t=0}=u_{yy}(0,0)\left(\dot{y}(0)\right)^2
\Rightarrow\dot{y}(0)=0\\ \\
0=\frac{d^4}{dt^4}u(\gamma(t))\Big|_{t=0}=
u_{xxxx}(0,0)\big(\dot{x}(0)\big)^4+5u_{xxy}(0,0)\big(\dot{x}(0)\big)^2\ddot{y}(0)+
3u_{yy}(0,0)\big(\ddot{y}(0)\big)^2
\end{cases}
\end{equation}
Since $k(0,0)=0$, from $\dot{y}(0)=0$  we get that $\ddot{y}(0)=0$ and by \eqref{d9} we deduce that 
\begin{equation}\label{d10}
u_{xxxx}(0,0)=0.
\end{equation}
Since by  Proposition \ref{P1} we have that $n$ is odd,  \eqref{d10} and \eqref{c1d} imply that $n\ge5$
in Theorem \ref{c2}. Finally from \eqref{c1c} we deduce that in a neighborhood of $(0,0)$ it holds
\begin{equation}
u_{x}(x,y)=(A+o(1))Re\left(z^{n-1}\right)+(B+o(1))Im\left(z^{n-1}\right),
\end{equation}
with $n-1\ge4$ and $(A,B)\ne(0,0)$. This gives \eqref{d12} and the claim follows.
\end{proof}
Let us recall that the Morse index $m(u)$ of a solution $u$ to 
\begin{equation}\label{1}
\begin{cases}
-\Delta u=f(u) & \hbox{ in }\Omega\\
u=0 & \hbox{ on }\de \Omega
\end{cases}
\end{equation}
is given by the number of negative eigenvalue  of the operator
\begin{equation}
\mathcal{L}=-\Delta-f'(u)I.
\end{equation}
Our final result is a consequence of Proposition \ref{D13}.
\begin{corollary}
Assume the same assumptions as in Theorem \ref{c2} and suppose that $\gamma$ is a smooth curve of maxima for $u$. Then if $u$ is a solution to \eqref{1} with $m(u)<3$ then 
\begin{equation}
k(0,0)\ne0
\end{equation}
\end{corollary}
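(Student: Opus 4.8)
We argue by contradiction: suppose $k(0,0)=0$; we show that then $m(u)\ge 3$, contradicting the hypothesis. Since we are under the assumptions of Theorem \ref{c2} and $\gamma$ is a curve of maxima, Propositions \ref{P1} and \ref{D13} apply, and the proof of Proposition \ref{D13} shows that $k(0,0)=0$ forces $u_{xxxx}(0,0)=0$, hence $n\ge 5$, and that in a neighbourhood of $(0,0)$
$$u_x(x,y)=(A+o(1))\,Re\!\left(z^{n-1}\right)+(B+o(1))\,Im\!\left(z^{n-1}\right),\qquad (A,B)\ne(0,0),\ \ n-1\ge 4 .$$
Thus the nodal set $\mathcal N_0=\{u_x=0\}$ has exactly $2(n-1)\ge 8$ smooth branches issuing from $(0,0)$, which divide a small disc around the origin into $2(n-1)$ sectors where $u_x$ takes alternating signs. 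Moreover, since $\nabla u\equiv(0,0)$ on $\gamma$ we have $u_x\equiv u_y\equiv0$ on $\gamma$, so $\gamma$ is one of the smooth embedded curves of $\mathcal N_0$ through $(0,0)$, and, being a set of maxima while $u<\|u\|_\infty$ near $\partial\Omega$, it is compactly contained in $\Omega$.

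Differentiating \eqref{1} in $x$, the function $u_x$ solves the linearised equation $-\Delta u_x=f'(u)u_x$ in $\Omega$. The plan is to produce three pairwise disjoint open sets $\omega_1,\omega_2,\omega_3\subset\subset\Omega$ with $\lambda_1\big(-\Delta-f'(u);\omega_j\big)<0$ for $j=1,2,3$: taking a first eigenfunction $\phi_j\in H^1_0(\omega_j)\subset H^1_0(\Omega)$ of each operator, the $\phi_j$ have pairwise disjoint supports and negative Rayleigh quotient, so the quadratic form $Q(\phi)=\int_\Omega\big(|\nabla\phi|^2-f'(u)\phi^2\big)$ is negative definite on $\mathrm{span}(\phi_1,\phi_2,\phi_3)$, whence $m(u)\ge 3$. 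To build the $\omega_j$, recall that any component of $\Omega\setminus\mathcal N_0$ compactly contained in $\Omega$ is bounded by $\mathcal N_0$ and satisfies $\lambda_1(-\Delta-f'(u);\cdot)=0$, since $|u_x|$ restricted to it is a positive first eigenfunction; while a subdomain $\subset\subset\Omega$ bounded by $\mathcal N_0$ in which $u_x$ changes sign has $\lambda_1<0$. Using the branching of order $2(n-1)\ge 8$ of $\mathcal N_0$ at $(0,0)$, that every interior singular point of $u_x$ has even valence $\ge 4$, that $\gamma\subset\subset\Omega$ is a smooth embedded branch of $\mathcal N_0$, and, via Lemma \ref{d3a} when $\nabla u\ne(0,0)$ on $\partial\Omega$, that $\mathcal N_0\cap\partial\Omega$ reduces to two transversal points, an Euler characteristic count on the graph $\mathcal N_0$ yields that $\Omega\setminus\mathcal N_0$ has at least three components compactly contained in $\Omega$. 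Finally, enlarging each of these three components slightly across one of its boundary arcs into a neighbouring nodal region, the enlargements being made near distinct points so as to remain pairwise disjoint (possible since two nodal regions of the same sign cannot be adjacent), turns the equality $\lambda_1=0$ into the strict inequality $\lambda_1<0$ by strict domain monotonicity of the first eigenvalue; this gives the desired $\omega_1,\omega_2,\omega_3$.

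The delicate step, which I expect to be the main obstacle, is the topological count guaranteeing at least three pairwise disjoint subdomains of $\Omega$ enclosed by $\mathcal N_0$: one must control how the $\ge 8$ branches at $(0,0)$ reconnect globally, exclude that the positive (or the negative) set of $u_x$ coalesces into too few pieces, and handle the behaviour of $\mathcal N_0$ near $\partial\Omega$; further interior singular points only raise the cycle count, so they are harmless. The curve of maxima $\gamma$ is decisive here: being a smooth, embedded branch of $\mathcal N_0$ contained compactly in $\Omega$, it constrains the global structure of the nodal set, and it is precisely at this point that the hypothesis that $(0,0)$ lies on a curve of maxima, rather than being an isolated degenerate maximum, is used.
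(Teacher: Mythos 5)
Your proposal follows essentially the same route as the paper: reduce via Proposition \ref{D13} to the case where the nodal set of $u_x$ has $2(n-1)\ge 8$ branches meeting at $(0,0)$, deduce that this nodal set encloses at least three subdomains compactly contained in $\Omega$, and use these to produce a three-dimensional subspace on which the quadratic form of $-\Delta-f'(u)I$ is nonpositive, forcing $m(u)\ge3$. The only differences are technical rather than structural: the paper takes the truncations of $u_x$ to the three nodal regions directly as test functions with zero quadratic form and treats the existence of the three enclosed regions as a ``straightforward extension'' of Lemma \ref{L}, whereas you enlarge the regions and use first eigenfunctions to obtain strictly negative Rayleigh quotients (a slightly more careful way of passing from three regions to $m(u)\ge3$), while honestly flagging as the main obstacle the same topological count that the paper dispatches in one line.
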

\begin{proof}
By Proposition \ref{D13} it is enough to prove that \eqref{d12} cannot occur. Indeed if it happens we deduce that 
\begin{equation}
\mathcal{N}=\{(x,y)\in\Om\hbox{ such that }u_x(x,y)=0\}
\end{equation}
encloses at least three nodal regions $\omega_1, \omega_2, \omega_3\subset\Om$ (straightforward extension of Lemma \ref{L}). So the functions
\begin{equation}
v_i(x,y)=\begin{cases}
u_x& \hbox{ if }(x,y)\in\omega_i\\
0 &  \hbox{ if }(x,y)\in\Om\setminus\omega_i
\end{cases}
i=1,2,3,
\end{equation}
verifies that $v_i\in H^1_0(\Om)$, $\int_\Om\nabla v_i\nabla v_j=0$ for $i\ne j$ and $\int_\Om|\nabla v_i|^2-f'(u)v_i^2=0$ for $i=1,2,3$. Then by the definition of Morse index we derive that $m(u)\ge3$ and this gives a contradiction.
\end{proof}

\end{document}